\numberwithin{equation}{section}
\newtheorem{lemma}{Lemma}[section]
\newtheorem{proposition}[lemma]{Proposition}
\newtheorem{theorem}[lemma]{Theorem}
\newtheorem{corollary}[lemma]{Corollary}
\theoremstyle{definition}
\newtheorem{definition}[lemma]{Definition}
\newtheorem{example}[lemma]{Example}
\theoremstyle{remark}
\newtheorem{remark}[lemma]{Remark}
\newcommand{\R}{\ensuremath{\mathbb{R}}}
\newcommand{\N}{\ensuremath{\mathbb{N}}}
\renewcommand{\a}{\mathbf{a}}
\renewcommand{\b}{\mathbf{b}}
\renewcommand{\c}{\mathbf{c}}
\renewcommand{\d}{\mathbf{d}}
\def\m{\mathfrak{m}}
\DeclareMathOperator{\CP}{CP}
\DeclareMathOperator{\Det}{Det}
\newcommand{\A}{\mathcal{A}}
\newcommand{\wA}{\widehat{A}}
\renewcommand{\t}{\mathbf{t}}
\newcommand{\eps}{\varepsilon}
\renewcommand{\t}{\mathbf{t}}
\renewcommand{\d}{\mathbf{d}}
\newcommand{\p}{\mathbf{p}}
\renewcommand{\L}{\mathcal{L}}
\renewcommand{\O}{\mathcal{O}}
\numberwithin{equation}{section} \numberwithin{table}{section}
\date{\today}
\begin{document}

\title{Analogues of Khintchine's theorem for random attractors}
\author{Simon Baker$^{1}$ and Sascha Troscheit$^{2}$\\ \\
	\emph{$^{1}$School of Mathematics,} \\ \emph{University of Birmingham,} \\ \emph{Birmingham,
	B15 2TT, UK.} \\ Email: simonbaker412@gmail.com\\ \\
	\emph{$^2$Faculty of Mathematics,} \\ \emph{University of Vienna,} \\
	\emph{Oskar-Morgenstern-Platz 1, 1090 Wien, Austria}\\ Email: saschatro@gmail.com\\ \\
}

\maketitle

\begin{abstract}
In this paper we study random iterated function systems. Our main result gives sufficient conditions
for an analogue of a well known theorem due to Khintchine from Diophantine approximation to hold
almost surely for stochastically self-similar and self-affine random iterated function systems. \\

\noindent \emph{Mathematics Subject Classification 2010}: 28A80, 37C45, 60J80.\\

\noindent \emph{Key words and phrases}: Random iterated function systems, Diophantine approximation, self-similar systems,
self-affine systems, Khintchine's theorem.

\end{abstract}

\section{Introduction}\label{sect:intro}
Khintchine's theorem is an important result in number theory which demonstrates that the Lebesgue
measure of certain limsup sets defined using the rationals is determined by the
convergence/divergence of naturally occurring volume sums. Inspired by this result, the first author
studied fractal analogues of Khintchine's theorem where the role of the rationals is played by a
natural set of points that is generated by the underlying iterated function system
\cite{Bak2,Bak,Bakerapprox2,BakOver}. The results of \cite{BakOver} demonstrate that for many
parameterised families of overlapping iterated function systems, we typically observe Khintchine
like behaviour. The results of \cite{BakOver} also demonstrate that by viewing overlapping iterated
function systems through the lens of Diophantine approximation, we obtain a new meaningful framework
for classifying iterated function systems.

In this article we consider analogues of Khintchine's theorem for random models of attractors. In
particular, we investigate Khintchine type results for random recursive fractal sets, a natural
class of randomly generated sets commonly used as a model for self-similar and self-affine sets. Our
main result shows that under appropriate hypothesis for our random recursive model, we will almost
surely observe Khintchine like behaviour. To state our main result in full it is necessary to
properly formalise our model and introduce several other notions. 
To motivate what follows we include the following easier to state theorem that is a consequence of
Theorem~\ref{Main theorem}.

\begin{theorem}
	\label{baby theorem}
Let $t_1$ and $t_2$ be two distinct real numbers. Fix $r_1$ and $r_2$ satisfying $0\leq r_1<r_2<1$ and let $\eta$ be the normalised
Lebesgue measure on $[r_1,r_2]$. Let $\textbf{r}:=(r_{\a})_{\a\in \cup_{n=1}^{\infty}\{1,2\}^{n}}$
be a sequence of real numbers enumerated by the finite words with digits in $\{1,2\},$ such that each
$r_{\a}$ is chosen independently from $[r_1,r_2]$ according to the law $\eta$. For each $\textbf{r}$
we define a projection map $\Pi_{\textbf{r}}:\{1,2\}^{\N}\to\mathbb{R}^{d}$ given by
$$\Pi_{\textbf{r}}(\b):=\sum_{k=1}^{\infty}t_{b_k}\cdot \prod_{j=1}^{k-1}r_{b_{1}\ldots b_{j}}.$$
Let $\b\in \{1,2\}^{\N}$ and assume $\log 2> -\int_{r_{1}}^{r_{2}}\log r\, dr.$ Then for almost
every $\textbf{r}$ the set $$\left\{x\in \mathbb{R}^{d}:|x-\Pi_{\textbf{r}}(a_{1}\ldots
a_{m}\b)|\leq \frac{1}{2^{m}\cdot m}\textrm{ for i.m. }a_{1}\ldots a_{m}\in
\cup_{n=1}^{\infty}\A^{n} \right\}$$ has positive Lebesgue measure.
\end{theorem}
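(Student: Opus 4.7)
The plan is to recognise this statement as a special instance of Theorem~\ref{Main theorem} applied to the two-branch random iterated function system with translations $t_1,t_2$, contraction ratios sampled i.i.d.\ from the uniform law $\eta$ on $[r_1,r_2]$, and approximation function $\psi(m)=1/(2^{m}m)$. The hypothesis $\log 2 > -\int_{r_1}^{r_2}\log r\,dr$ rewrites as $\log 2 > -\E_\eta[\log r]$, i.e.\ the topological entropy of the full $2$-shift exceeds the mean log-contraction rate $-\E_\eta[\log r]$. This is exactly the statement that the natural random similarity dimension $s^{*} := \log 2/(-\E_\eta[\log r])$ is strictly greater than $1$, which should correspond to the hypothesis in Theorem~\ref{Main theorem} under which the random attractor carries positive Lebesgue measure.

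The first substantive step is to show that the random pushforward measure $\mu_{\textbf{r}}:=(\Pi_{\textbf{r}})_{*}\nu$, where $\nu$ is the symmetric Bernoulli measure on $\{1,2\}^{\N}$, is almost surely absolutely continuous with respect to Lebesgue measure on $\mathbb{R}$ (and hence that the random attractor has positive Lebesgue measure). This is a random-recursive analogue of Peres--Solomyak in the i.i.d.\ uniform-ratio regime, and it is the ingredient that allows a Khintchine-style counting statement to be transferred into a Lebesgue statement. I expect this to be a clean consequence of $s^{*}>1$, either built into Theorem~\ref{Main theorem} or into its proof.

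The second step is the Khintchine-type divergence calculation. At level $m$ there are $2^{m}$ prefixes $a_{1}\ldots a_{m}$, each producing a ball of radius $1/(2^{m}m)$ and Lebesgue mass $\asymp 1/(2^{m}m)$, so the total Lebesgue mass at level $m$ is $\asymp 1/m$ and summing in $m$ diverges. The targets $\Pi_{\textbf{r}}(a_{1}\ldots a_{m}\b)$ play the role of the rationals in the classical Khintchine problem, and the goal is to conclude that the associated limsup set has positive Lebesgue measure almost surely in $\textbf{r}$.

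The hardest step, and the main obstacle, is the passage from divergent volume sum to positive Lebesgue measure of the limsup set, which in the deterministic setting requires an ubiquity or quasi-independence input. The random model should supply this for free: target points associated with different prefixes $a_{1}\ldots a_{m}$ depend on largely disjoint subcollections of the i.i.d.\ ratios $r_{\a}$, so a Fubini and second-moment computation should give quasi-independence and hence positive probability that the limsup set has positive Lebesgue measure. A $0$-$1$ law exploiting invariance of the event $\{\L(E_{\textbf{r}})>0\}$ under permutations of independent subtrees of $(r_{\a})_{\a}$ then upgrades positive probability to almost sure positivity. I anticipate that Theorem~\ref{Main theorem} encapsulates precisely this mechanism, so that the proof of the baby theorem reduces to verifying the two numerical hypotheses above ($s^{*}>1$ and divergent volume sum) and invoking the general result.
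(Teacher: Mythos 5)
Your top-level reduction is the intended one: Theorem~\ref{baby theorem} is meant to be read off from the general result by specialising to the two-branch scalar RIFS with $\Omega_1=\Omega_2=[r_1,r_2]$, $\eta_i$ the normalised Lebesgue measure, distinct translations $t_1,t_2$, the uniform Bernoulli measure $\m$ (so $h(\m)=\log 2$ and $\lambda(\eta,\m)=-\int\log r\,d\eta$), and $g(n)=1/n$; this is exactly Corollary~\ref{Bernoulli corollary} with $p_1=p_2=\tfrac12$ and $d=1$, where $\Psi(\a)=2^{-|\a|}/|\a|$. The genuine gap is that Theorem~\ref{Main theorem} and Corollary~\ref{Bernoulli corollary} carry two structural hypotheses besides the entropy/Lyapunov inequality, namely that the RIFS is (distantly) non-singular and that the logarithmic moment condition \eqref{eq:CramerAss} holds, and your plan never verifies either: you reduce everything to ``the two numerical hypotheses ($s^*>1$ and a divergent volume sum) and invoking the general result.'' The non-singularity check is the one piece of concrete work in the deduction (it is Example 1 of the paper in the case $d=1$): for $x$ in the random attractor the pushforward of $\eta$ under $A\mapsto Ax$ is normalised Lebesgue on the interval with endpoints $r_1x,r_2x$, with density bounded by a constant times $1/|x|$, and when $|x|$ is small the image cannot meet a ball that stays away from the origin; together these give $\eta_i\bigl(A: Ax\in B(y,\rho)\bigr)\ll\rho$ for the balls allowed in the distantly non-singular condition. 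The moment condition is a short computation showing finiteness for $|s|<1$. Without these verifications you cannot invoke the general theorem, and they are precisely the content of the specialisation. A smaller inaccuracy: the hypothesis is not that the level-$m$ volume sum diverges, but that $\Psi$ is equivalent to $(\m,g)$ with $g\in G$; membership of $g(n)=1/n$ in $G$ requires divergence of $\sum_{n\in B}1/n$ over every $B$ of upper density close to $1$, not merely divergence of the full harmonic sum.

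Two further remarks on your description of the mechanism inside the general theorem (speculative on your part, and not errors in the deduction itself, but worth correcting): almost sure absolute continuity of the pushforward measure $\mu_{\mathbf{r}}$ is neither supplied by Theorem~\ref{Main theorem} nor needed; the theorem produces positive Lebesgue measure of the limsup set directly from a pair-correlation (transversality) estimate, a separated-set construction, and Proposition~\ref{general prop}, and positive measure of the attractor is a consequence (Corollary~\ref{measure corollary}), not an ingredient. Likewise no permutation $0$--$1$ law is used: the almost sure statement comes from Borel--Cantelli estimates on a set $\Omega'$ of measure at least $1-\eps_0$ with $\eps_0$ arbitrary. Your second-moment/quasi-independence heuristic is, however, in the right spirit for Proposition~\ref{Pairs prop}.
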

Here and throughout we write i.m.~as a shorthand for infinitely many. We emphasise that our main
result also covers higher dimensional random iterated function systems that may contain affine maps.

\section{Background}
\label{sect:background}
In this section we recall some background results from fractal geometry and Diophantine
approximation. We also detail our motivating problem in the deterministic setting and provide some
background on random models for iterated function systems.

\subsection{Fractal Geometry}
Given a finite set of contractions $\Phi=\{\phi_i:\R^d\to\R^d\}_{i\in \A}$
there exists, by a well know result due to Hutchinson \cite{Hut}, a unique non-empty compact set $X\subseteq\mathbb{R}^d$
satisfying 
\[
X=\bigcup_{i\in \A}\phi_i(X).
\]
This set $X$ is called the attractor of $\Phi$ and $\Phi$ is
commonly referred to as an iterated function system or IFS for short. When each element of the IFS
is an affine map we refer to the attractor as a self-affine set.
Similarly, when each element of the IFS is a similarity, i.e.\ there
exists $r_i\in (0,1)$ such that $|\phi_i(x)-\phi_i(y)|=r_i|x-y|$ for all $x,y\in \R^d$, we say that
the attractor is a self-similar set. For a self-similar IFS $\Phi$ we define the similarity dimension $\dim_{S}(\Phi)$ to
be the unique solution to $\sum_{i\in \A}r_i^s=1$. 
The similarity dimension is always an upper bound for the Hausdorff dimension of $X$. For self-affine
sets there is a similar upper bound for the Hausdorff dimension defined in terms of the affinity
dimension; see \cite{Fal} for its definition. The additional structure of affine maps or similarities makes
questions on the attractor more tractable and the two classes are the most
studied types of attractor.

A classical problem from
fractal geometry is to determine the metric and topological properties of self-similar sets and
self-affine sets; see \cite{Fal1,Fal2}. To make progress with this problem one often studies the
pushforwards of dynamically interesting measures onto the attractor. This approach has resulted in
many significant breakthroughs; see e.g.\ \cite{BaHoRa, Hochman, Hochman2, JoPoSi,PolSimon, Shm2,
Sol} and the references therein. Many important conjectures in this area can be summarised by the
statement: either an IFS contains an exact overlap,\footnote{We say that $\Phi$ contains an exact overlap
if there exists two words $(a_1,\ldots,a_n)$ and $(b_1,\ldots,b_m)$ such that $\phi_{a_1}\circ
\cdots \circ \phi_{a_n} = \phi_{b_1}\circ \cdots \circ \phi_{b_m}$} or the corresponding attractor
and the dynamically interesting measures supported upon it exhibit the expected behaviour. These
conjectures have been verified in certain special cases, see \cite{Hochman, Shm2, Varju2}. Part of
the motivation behind this paper is to obtain a deeper classification of iterated function systems
that goes beyond the exact overlap versus no exact overlap dichotomy.

\subsection{Diophantine approximation}
Given a function $\Psi:\mathbb{N}\to[0,\infty),$ we can define a limsup set in terms of
neighbourhoods of the rationals.
Let

\[
J(\Psi):=\Big\{x\in\mathbb{R}:\Big|x-\frac{p}{q}\Big|\leq \Psi(q) \textrm{ for i.m. } (p,q)\in
\mathbb{Z}\times\mathbb{N}\Big\}.
\]
The Borel-Cantelli lemma implies
that if $\sum_{q=1}^{\infty}q\cdot \Psi(q)<\infty,$ then $J(\Psi)$ has zero Lebesgue measure.
Interestingly, a theorem due to Khintchine shows that a partial converse to this statement holds.

\begin{theorem}[\cite{Khit}]
	\label{Khintchine}
	If $\Psi:\mathbb{N}\to [0,\infty)$ is decreasing and 
	\[
	\sum_{q=1}^{\infty}q\cdot
	\Psi(q)=\infty,
	\]
	then Lebesgue almost every $x\in \mathbb{R}$ is contained in $J(\Psi)$.
\end{theorem}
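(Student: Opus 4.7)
The plan is to deduce the full-measure conclusion from a quasi-independent divergence Borel--Cantelli lemma combined with a zero-one law, following the standard reduction to the unit interval.

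First, since $J(\Psi)$ is invariant under integer translations, it suffices to show that $|J(\Psi)\cap[0,1]|=1$. A zero-one law of Cassels--Gallagher type guarantees that the Lebesgue measure of $J(\Psi)\cap[0,1]$ is either $0$ or $1$, so the task reduces to showing this set has strictly positive Lebesgue measure. If $\Psi(q)>1/(2q)$ holds for infinitely many $q$, the conclusion is immediate from the definition of $J(\Psi)$, so we may assume $\Psi(q)\leq 1/(2q)$ for all sufficiently large $q$.

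Next, for each such $q$ introduce the event
\[
E_q := [0,1]\cap \bigcup_{p=0}^{q}\bigl[p/q-\Psi(q),\,p/q+\Psi(q)\bigr],
\]
so that $\limsup_q E_q \subseteq J(\Psi)\cap[0,1]$. Under our size assumption the intervals in the union are disjoint, hence $|E_q|$ is comparable to $q\cdot\Psi(q)$. Combined with the monotonicity of $\Psi$, the hypothesis $\sum_q q\cdot\Psi(q)=\infty$ yields $\sum_q |E_q|=\infty$; the monotonicity is what allows the divergence of the ``volume sum'' to pass to the divergence of the measures of the events $E_q$.

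The main technical step is then to establish \emph{quasi-independence}: there exists an absolute constant $C>0$ such that for all $q\neq q'$,
\[
|E_q\cap E_{q'}| \leq C\cdot|E_q|\cdot|E_{q'}|.
\]
The key point here is a lattice-point count: for each fraction $p/q$, one must bound the number of fractions $p'/q'$ lying within distance $\Psi(q)+\Psi(q')$ of $p/q$, using the basic separation $|p/q-p'/q'|\geq 1/(qq')$ for distinct fractions and the monotonicity of $\Psi$ to control the cumulative contribution. I expect this to be the hardest step, as one must be careful when $q$ and $q'$ share large common factors, where fractions can genuinely coincide and overlap counts inflate.

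Granted quasi-independence, the quasi-independent divergence Borel--Cantelli lemma produces $|\limsup_q E_q|\geq C^{-1}>0$, so $|J(\Psi)\cap[0,1]|>0$, and the zero-one law upgrades this to full measure on $[0,1]$. Translation invariance then extends the conclusion to all of $\mathbb{R}$, completing the proof.
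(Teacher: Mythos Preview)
The paper does not give its own proof of this theorem; it is quoted as a classical result with a citation to Khintchine's 1924 paper and serves purely as background motivation for the later fractal analogues. There is therefore nothing in the paper to compare your proposal against.

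That said, your outline is the standard modern route to Khintchine's theorem and is essentially sound, with one caveat. The pairwise bound you state, $|E_q\cap E_{q'}|\le C\,|E_q|\,|E_{q'}|$ for \emph{all} $q\neq q'$, is in general false: the usual overlap estimate gives
\[
|E_q\cap E_{q'}|\ \ll\ q\Psi(q)\cdot q'\Psi(q')\ +\ \gcd(q,q')\,\Psi(\min(q,q')),
\]
and the second term can dominate the product $|E_q|\,|E_{q'}|$ when $\gcd(q,q')$ is large. What one actually proves is the \emph{averaged} quasi-independence
\[
\sum_{q,q'\le N}|E_q\cap E_{q'}|\ \ll\ \Bigl(\sum_{q\le N}|E_q|\Bigr)^{2},
\]
and this is exactly what the Erd\H{o}s--Chung form of the divergence Borel--Cantelli lemma requires. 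The monotonicity of $\Psi$ is used to control the cumulative contribution of the $\gcd$ terms when summing over pairs. You have correctly flagged the large-gcd case as the delicate point; the fix is to weaken the target to the averaged inequality rather than to attempt the pairwise one.
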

An example due to Duffin and Schaeffer shows that one cannot remove the monotonicity
assumptions from Theorem \ref{Khintchine} \cite{DufSch}. This lead to the famous Duffin and
Schaeffer conjecture that was recently proved by Koukoulopoulos and Maynard \cite{KouMay}.

Results analogous to Khintchine's theorem which show that the measure of a limsup set is determined
by the convergence/divergence of some naturally occurring volume sum are present throughout
Diophantine approximation and metric number theory (see \cite{BDV}). For our purposes, the important
aspect of the above is that by studying the metric properties of the sets $J(\Psi)$ for those $\Psi$
satisfying $\sum_{q=1}^{\infty}q\cdot \Psi(q)=\infty,$ one obtains a quantitative description of how
the rational numbers are distributed within the real numbers. In particular, the example due to
Duffin and Schaeffer of a $\Psi$ for which $\sum_{q=1}^{\infty}q\cdot \Psi(q)=\infty,$ yet $J(\Psi)$
has zero Lebesgue measure, reveals certain subtleties in the geometry of the rational numbers.

\subsection{Overlapping iterated function systems from the perspective of metric number
theory}Khintchine's theorem provides a quantitative description of how the rationals are distributed
within $\R$. The motivation behind the work discussed below comes from a desire to obtain an
analogous quantitative description for how an iterated function system overlaps.

Given a finite set $\A$ we let $\A^*=\bigcup_{n=1}^{\infty}\A^n$ denote the corresponding set of
finite words. Given an IFS $\{\phi_i\}_{i\in\A}$ and a word $\a=(a_1 \ldots a_n)\in \A^*,$ we let
$\phi_{\a}:=\phi_{a_1}\circ \cdots \circ\phi_{a_n}$. We also let $|\a|$ denote the length of a word
$\a.$ Now suppose we have an IFS $\Phi$, a function $\Psi:\A^*\to[0,\infty)$, and $z\in X$, we
define the following analogue of the set $J(\Psi)$:  
\[
W_{\Phi}(z,\Psi):=\left\{x\in \mathbb{R}^d:|x-\phi_{\a}(z)|\leq
\Psi(\a)\textrm{ for i.m.\ }\a\in \A^*\right\}.
\]
For the set $W_{\Phi}(z,\Psi)$ the role of the rationals is played by the images of $z$ obtained by
repeatedly applying elements of the IFS. Proceeding via analogy with Theorem \ref{Khintchine}, 
it is reasonable to expect
that there exists a divergence condition on volume sums which implies that being
contained in $W_{\Phi}(z,\Psi)$ holds almost surely with respect to some measure. One particular
instance of this could be formalised as follows: Let $\mathcal{H}^s$ be the $s$-dimensional
Hausdorff measure. Is it true that 
\begin{equation}
\label{divergence}
\sum_{n=1}^{\infty}\sum_{\a\in \A^n}\Psi(\a)^{\dim_{H}(X)}=\infty\Rightarrow  \mathcal{H}^{\dim_{H}(X)}(W_{\Phi}(z,\Psi))=\mathcal{H}^{\dim_{H}(X)}(X)?
\end{equation} 
The existence of a general class of $\Psi$ for which \eqref{divergence} holds demonstrates how well the
images of $z$ are spread out within $\R^d$.
Studying those $\Psi$ for which \eqref{divergence} holds provides a quantitative description of
how an IFS overlaps.

In a series of recent papers, the first author established that for many IFSs we do observe
Khintchine like behaviour, i.e.~\eqref{divergence} holds for some suitable class of $\Psi,$ see
\cite{Bak2,Bak,Bakerapprox2,BakOver}.  Related results had appeared previously in papers of Persson
and Reeve
\cite{PerRev,PerRevA}, and Levesley, Salp, and Velani \cite{LSV}. In
\cite{Bak2} it was shown that whenever $\Phi$ is an IFS consisting of similarities and satisfies the
open set condition, then an appropriate analogue of Theorem \ref{Khintchine} holds\footnote{This
	result in fact holds whenever $\Phi$ consists of conformal maps.}. See \cite{ABa} for some further related
work. The more
challenging and interesting case is when the underlying IFS satisfies $\dim_{S}(\Phi)> d$, or the
equivalent inequality for the affinity dimension. Loosely speaking, when these inequalities are
satisfied it is possible for a better rate of approximation to hold generically. It was shown in
\cite{BakOver} that for many parameterised families of IFSs for which $\dim_{S}(\Phi)> d$ holds for
each member of the family, or the equivalent inequality for affinity dimension, an
analogue of Khintchine's theorem holds generically. To detail this analogue we introduce the
following notation which we will use throughout.

Given a set $B\subset \mathbb{N},$ we define the 
the upper density of $B$ to be
\[
\overline{d}(B):=\limsup_{n\to\infty}\frac{\#\{1\leq j\leq n:j\in B\}}{n}.
\]
Given $\eps>0,$ let
\[
G_{\eps}:=\left\{g:\mathbb{N}\to[0,\infty):\sum_{n\in B}g(n)=\infty\,, \forall B\subseteq
\mathbb{N} \textrm{ s.t.\ } \overline{d}(B)>1-\eps\right\}.
\]
We also define
\begin{equation}
\label{H functions} G:=\bigcup_{\eps\in(0,1)}G_{\eps}.
\end{equation} 
For example, it can be shown that the function $g(n)=1/n$ is contained in $G$.

Let $\lambda\in(0,1)$ and $O$ be a $d\times d$ orthogonal matrix. For any
$\t=(t_1,\ldots,t_{\#\A})\in\R^{\#\A\cdot d}$ we can define an iterated function system
$\Phi_{\t}:=\{\phi_i(x)=\lambda\cdot Ox+ t_i\}_{i\in \A}$. We let $X_\t$ denote the corresponding attractor of
$\Phi_\t$. The following theorem follows from \cite[Theorem 2.6, Corollary 2.7]{BakOver} and demonstrates that Khintchine like behaviour typically occurs within this family.

\begin{theorem}
	\label{Exampletheorem}
	Fix $\lambda\in(0,1/2)$ and $O$ a $d\times d$ orthogonal matrix. Suppose $\frac{\log \#\A}{-\log \lambda}>d.$ Then for Lebesgue almost every
	$t\in\mathbb{R}^{\#\A\cdot d}$, for any $g\in G$ and $z\in X_{\t},$ the set
	\[
	\left\{x\in\R^d:|x-\phi_{\a}(z)|\leq \left(\frac{g(|\a|)}{(\#\A)^{|\a|}}\right)^{1/d} \textrm{ for
		i.m.\ } \a\in \A^*\right\}
	\]
	has positive Lebesgue measure.
\end{theorem}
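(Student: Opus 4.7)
The plan is to derive Theorem~\ref{Exampletheorem} by verifying that the parameterised family $\{\Phi_\t\}_{\t\in\R^{\#\A\cdot d}}$ satisfies the hypotheses of the general Khintchine-type result \cite[Theorem 2.6]{BakOver}, from which \cite[Corollary 2.7]{BakOver} then packages the stated conclusion. The heart of the matter is a two-ingredient recipe: (i) absolute continuity of projected Bernoulli measures for generic translation parameters, and (ii) a robust divergence condition on volume sums that withstands removal of exceptional subsets of $\N$.

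First, I would work with the uniform Bernoulli measure $\mu$ on $\A^\N$ assigning probability $1/\#\A$ to each letter, and consider its pushforward $\nu_\t$ to $X_\t$ under the canonical projection
\[
\Pi_\t(\b) = \sum_{k=1}^\infty \lambda^{k-1} O^{k-1} t_{b_k}.
\]
The similarity dimension of $\mu$ equals $s = \log\#\A/(-\log\lambda)$, which by hypothesis exceeds $d$. Using $\lambda<1/2$, I would invoke a Peres--Solomyak type transversality argument to deduce that $\nu_\t$ is absolutely continuous with respect to $d$-dimensional Lebesgue measure, with $L^2$ density, for Lebesgue almost every $\t$. This is the dimensional input that compensates for the overlapping structure of $\Phi_\t$ when $s>d$.

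Second, I would translate the divergence of the volume sums into positive Lebesgue measure of the limsup set. For $\a\in\A^n$, the ball $B(\phi_\a(z),(g(n)/(\#\A)^n)^{1/d})$ has Lebesgue measure a constant times $g(n)/(\#\A)^n$, so the total volume at level $n$ is proportional to $g(n)$. Since $g\in G_\eps$ for some $\eps>0$, the sum $\sum_{n\in B} g(n)$ diverges along every $B\subseteq\N$ with $\overline{d}(B)>1-\eps$. Combining this with the absolute continuity of $\nu_\t$ via a quasi-independence / second moment argument on cylinders of $\A^\N$ yields positive $\nu_\t$-measure of the limsup set, and hence positive Lebesgue measure.

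I expect the main obstacle to be establishing absolute continuity of $\nu_\t$: the condition $s>d$ forces genuine overlaps, so one cannot appeal to the open set condition or any deterministic separation, and must instead extract absolute continuity from the randomness of the translation parameters via a transversality estimate on the space of $\t$. The role of the robust divergence class $G_\eps$, rather than a naive divergent-sum class, is also essential here: it allows one to discard the exceptional measure-zero set of $\t$ (and a further measure-zero set of $\b$ where the second moment argument fails on a sparse subset of levels) without destroying divergence of the relevant partial sums.
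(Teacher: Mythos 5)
Your top-level plan coincides with what the paper actually does: Theorem \ref{Exampletheorem} is not proved in this paper at all, it is quoted as an immediate consequence of Theorem 2.6 and Corollary 2.7 of \cite{BakOver}, so ``verify the hypotheses of that result and invoke it'' is exactly the paper's argument. Where you diverge is in your sketch of how the cited result is established. The machinery of \cite{BakOver} (mirrored in Sections \ref{technical} and \ref{proof section} of the present paper for the random setting) is not ``absolute continuity of the projected Bernoulli measure plus a second-moment argument on cylinders''; it is a pair-correlation estimate in the spirit of \cite{BenSol,PerSol}, carried out on the parameter space of translations: one shows that for Lebesgue almost every $\t$ the expected number of pairs $\a\neq\a'\in\A^n$ with $|\phi_{\a}(z)-\phi_{\a'}(z)|\leq s(\#\A)^{-n/d}$ is $\O(s^d(\#\A)^{n})$, uniformly in $z$; Markov's inequality and an upper-density argument then give, for a set of levels $n$ of upper density close to $1$, an $(\#\A)^{-n/d}$-separated subset of $\{\phi_{\a}(z)\}_{\a\in\A^n}$ of cardinality comparable to $(\#\A)^n$, and the counting lemma reproduced here as Proposition \ref{general prop}, together with $g\in G_{\eps}$, converts this into positive Lebesgue measure of the limsup set.

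Your proposed route through absolute continuity of $\nu_\t$ with $L^2$ density is related (the $L^2$ norm is morally a pair-correlation bound), but as written it has two gaps. First, absolute continuity concerns the distribution of $\Pi_\t(\b)$ for $\mu$-random $\b$ and does not by itself deliver the level-by-level non-concentration of the finite orbit $\{\phi_{\a}(z)\}_{\a\in\A^n}$ at scale $(\#\A)^{-n/d}$, nor the quasi-independence and the lower bounds on the $\nu_\t$-measure (or Lebesgue measure) of the level-$n$ neighbourhoods that a divergence Borel--Cantelli argument needs; an $L^2$ density is not a bounded density, so $\nu_\t(B(\phi_\a(z),r))\asymp r^d$ is not available without further work. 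Second, the theorem's quantifiers require the conclusion for every $z\in X_\t$ and every $g\in G$ outside a single null set of $\t$, so you are not permitted to discard ``a further measure-zero set of $\b$'' as your final paragraph suggests; in \cite{BakOver} the pair-correlation estimates are uniform in $z$ precisely so that no exceptional set of base points arises. If you intend your sketch as a genuine alternative proof of the cited theorem rather than a heuristic, these are the points that would have to be repaired, most naturally by replacing the absolute-continuity step with the separated-set counting described above.
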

Suitable analogues of Theorem \ref{Exampletheorem} hold with different rates of contraction and with
similarities replaced by affine maps (see \cite[Theorem 2.6]{BakOver}). 

The utility of studying IFSs using ideas from Diophantine approximation is emphasised by an observation made in \cite{BakOver}. Consider the parameterised family of IFSs given
by 
\[
\Phi_{t}=\Big\{\phi_1(x)=\frac{x}{2},\, \phi_{2}(x)=\frac{x+1}{2},\,
\phi_{3}(x)=\frac{x+t}{2},\, \phi_{4}(x)=\frac{x+1+t}{2}\Big\}.
\]
Here $t\in [0,1]$ and the
attractor of $\Phi_t$ is $[0,1+t]$. For $t\in [0,1]$ and a probability vector $\p=(p_1,p_2,p_3,p_4)$ we let $\mu_{\p,t}$
be the self-similar measure corresponding to $\p$ and $\Phi_t$ (see \cite{Fal2} for the definition of a self-similar measure). It was shown in \cite{BakOver} that
there exists $t,t'\in [0,1]$ such that for any probability vector $\p$ we have $\dim \mu_{\p,t}=\dim
\mu_{\p,t'}=\min\{\frac{-\sum_{i=1}^4p_i\log p_i}{\log 2},1\},$ and the set of $\p$ for which it is known that $\mu_{\p,t}$ is absolutely continuous equals
the set of $\p$ for which $\mu_{\p,t'}$ is known to be absolutely continuous. However there exists
$\Psi$ for which $W_{\Phi_{t}}(z,\Psi)$ has full measure within $[0,1+t]$ for all $z\in [0,1+t]$ and
$W_{\Phi_{t'}}(z,\Psi)$ has zero measure for all $z\in [0,1+t']$. In other words $\Phi_t$ and
$\Phi_{t'}$ are indistinguishable in terms of the properties of their self-similar measures, but
their overlapping behaviours can be distinguished using the language of the sets $W_{\Phi}(z,\Psi)$.

\subsection{Random models for iterated function systems}
The main results of \cite{BakOver} hold for several families of parameterised IFSs. In
the absence of a general result for parameterised families of IFSs, it is natural to study suitable
random analogues that mirror the key properties a family exhibits.
This approach benefits from small random perturbations that ``smooth out'' the parts that are
intractable in a deterministic approach.
This was employed in \cite{JoPoSi} by adding random translations to the deterministic linear parts
to determine the almost sure dimensions of their random attractors, as well as finding conditions for
absolute continuity. A complementary approach was taken in \cite{PSS} which randomised the linear
part while keeping the translates fixed. It further assumed that the linear parts were similarities
and that the randomisation is uniform for all cylinders in that level of the construction (knows as random homogeneous or
$1$-variable attractor, see \cite{tromodels}). 
A similar model was considered in \cite{Jarvenpaa17}, where the authors
determined the dimensions of random self-affine sets.
In this paper we randomise the linear part at every stage using the random recursive model, where we
allow the linear parts to be both
self-similar and self-affine. Theorem \ref{Main theorem} is the main
result of this paper. It gives sufficient conditions for a random model to ensure that an analogue
of Khintchine's theorem holds almost surely.\\

\noindent\textbf{Notation.} For two real valued functions $f$ and $g$ defined on some set $S$, we
write $f\ll g$ or $f=\O(g)$ if there exists $C>0$ such that $|f(x)|\leq C\cdot
g(x)$ for all $x\in S$. We write $f\asymp g$ if $f\ll g$ and $g\ll f$. 

Let $\A$ be a finite set and $i\in \A$. Given a finite word $\a\in \A^{*}$ we let
$|\a|_i:=\#\{1\leq k\leq |\a|:a_k=i\}$ denote the number of occurrences of the digit $i$ in $\a$.
Moreover, given two words $\a,\b\in \A^{*}$ we let $|\a\wedge \b|$ denote the maximal common prefix
of $\a$ and $\b$, assuming such a prefix exists. If no such prefix exists, $|\a\wedge \b|$ is the empty word.

\section{Our random model and statements of results} 
\label{sect:Results}
In this paper we will consider the stochastically self-similar (and self-affine) model which is is also known as the random recursive or
$\infty$-variable model. It is one of the most important models of randomness in fractal geometry
and was introduced, independently, by Falconer \cite{Falconer86} and Graf \cite{Graf87} and has
subsequently attracted a lot of attention, see e.g.\ \cite{Olsen, ShmerkinSuomala, tromodels} and
the references therein. To define this randomisation rigorously, we first define random functions $f_{\a}$ indexed by
$\a\in\A^*$. Each $f_{\a}$ is chosen independently from all other $\b\neq\a$ following a
distribution that only depends on the last letter of $\a$. 

Let $M_{d}\subset \R^{d^{2}}$ denote the set
of invertible $d\times d$ matrices with real entries satisfying $\|A\|<1$, where $\|.\|$ denotes the
usual operator norm. We write $S_d$ for those elements of $M_d$
that are also similarities, i.e.\ are a scalar multiple of an orthogonal matrix.
For each $i\in \A$ we let $\Omega_i$ be a subset of $M_{d}$ with operator norm uniformly bounded away from $1$. Moreover, for each $i\in A$ we let $\eta_i$ denote a Borel probability measure supported on $\Omega_i$.
We define a product measure indexed by the elements of $\A^*$ such that the distribution $\eta|_{\a}$ restricted to
$\a\in\A^*$ depends only on
the last letter $l(\a):=a_{|\a|}$. That is, we set
$\eta=\prod_{\a\in\A^*} \eta_{l(\a)}$ as the product measure on the product space
$\Omega =\prod_{\a\in\A^*}\Omega_{l(\a)}$. 
Thus, a particular realisation $\omega\in\Omega$ is a collection of matrices in $M_d$ indexed by
$\a\in\A^*$, where each entry is distributed according to its respective $\eta_{l(\a)}$.

We will make the distinction between $\omega\in\Omega$ as a realisation chosen with law $\eta,$ and
the linear component it defines at
a particular index by writing $A_{\omega,\a}(x): = (\omega)_{\a}\cdot x$ for the linear function
given by the random matrix indexed
by $\a$. We will often write $A_{\a}$ for $A_{\omega,\a}$ when the choice of $\omega$ is implicit.
Note that $A_{\a}$ is distributed with law $\eta_{l(\a)}$, the distribution corresponding
to the last letter of $\a$.
By definition, this function is independent from $A_{\b}$ for all $\b\in\A^*$ with 
$\b\neq\a$.

Let $\{t_i\}_{i\in \A}$ be a finite collection of distinct
translation vectors in $\R^d$, that is $i\neq j \Rightarrow |t_i-t_j|\neq 0$. For every $\omega\in\Omega$
we define a random contraction $f_{\omega,\a}$ for
every finite word $\a\in\A^*$ to be
\[
f_{\omega,\a}(x):= A_{\omega,\a}(x) + t_{l(\a)}.
\]
We emphasise that although the distribution of $A_{\omega,\a}$ only depends on the last letter of
$\a$, the exact realisation depends upon $\a$ and is independent of all
$\b\neq \a$. We will often omit the realisation $\omega$ from $f_{\omega,\a}$ when it is clear from context. Given $\omega\in \Omega$ and $(a_1 \ldots a_n)\in \A^*$ we denote the corresponding concatenation of matrices as follows:
$$\widehat{A}_{\omega,a_1\ldots a_n}:=A_{\omega,a_1}\circ \cdots \circ A_{\omega,a_1\ldots a_n}.$$
For a finite word $\a=(a_1,\ldots,a_n)$ and $\omega\in \Omega$ we let 
\[
\phi_{\omega,\a}(x):=f_{\omega,a_1}\circ \cdots \circ f_{\omega,a_1\ldots a_n}(x)
\]
Given $\omega\in \Omega$ we define the projection map  $\Pi_{\omega}:\A^{\mathbb{N}}\to\R^d$ via the
equation 
\[\Pi_{\omega}(\a) = \lim_{k\to
	\infty}\phi_{\omega,a_1\ldots a_k}(\mathbf{0}).
\]
Notice that $\mathbf{0}$ can be replaced with any element of $\mathbb{R}^d$. In addition, given a finite word $\a\in \A^*$ and
$\omega\in \Omega,$ we define the projection map  $\Pi_{\omega,\a}:\A^{\mathbb{N}}\to\R^d$ to be
\[
\Pi_{\omega,\a}(\b)=\lim_{k\to\infty} f_{\omega,\a b_1}\circ \cdots \circ f_{\omega,\a
	b_{1}\ldots b_k}(\mathbf{0}).
\]
Notice that for
any $\a\in \A^*$ and $\b\in \A^{\N}$ we have $$\Pi_{\omega}(\a \b)=\phi_{\omega,\a}(\Pi_{\omega,\a}(\b)).$$ In what follows we refer to the tuple $(\{\Omega_i\}_{i\in
	\A},\{\eta_i\}_{i\in \A},\{t_i\}_{i\in \A})$ as a random iterated function system or RIFS for short. For any $\omega\in \Omega$ its unique random attractor is defined to be
\[
F_\omega: = \bigcup_{\a\in\A^{\mathbb{N}}}\Pi_{\omega}(\a).
\]
$F_{\omega}$ is a non-empty compact set for all $\omega\in \Omega$. 
By definition, the set $F_{\omega}$ is stochastically self-similar in the sense that 
\[
  F_{\omega} \equiv_d\bigcup_{i\in\A}f_{\kappa,i}(F_{\tau_i})
\]
holds in distribution, where $\omega,\kappa,\tau_1,\dots,\tau_{\#\A}$ are independently realisations
in $(\Omega,\eta)$.

Given $\Psi:\A^*\to[0,\infty),$ $\b\in\A^{\mathbb{N}}$, and $\omega\in \Omega$, our random analogue
of the deterministic set $W_{\Phi}(z,\Psi)$ is defined to be 
\[
W_{\omega}(\b,\Psi):=\left\{x\in
\mathbb{R}^d:|x-\Pi_{\omega}(\a\b)|\leq \Psi(\a) \textrm{ for i.m.\ }\a\in \A^*\right\}.
\]

\subsection{An auxiliary family of sets}

Directly studying the sets $W_{\omega}(\b,\Psi)$ for a general $\Psi$
is a challenging problem. Instead, we study properties of an auxiliary family that we can then use
to deduce results about general $W_{\omega}(\b,\Psi)$. This auxiliary
family is defined below using dynamically interesting measures on $\A^{\N}$. As such it is necessary to introduce some definitions describing important properties of these measures.

The cylinder set associated with a finite word $\a=a_1\dots a_n\in \A^{*}$ is
\[
[\a]:=\{\b\in\A^\N:b_k=a_k \text{ for all } 1\leq k\leq n\}.
\]
Let $\sigma:\A^{\N}\to\A^{\N}$, $\sigma(a_1a_2\dots)=a_2a_3\dots$ denote the
usual left shift map. Given a probability measure $\m$ supported on $\A^{\mathbb{N}},$ we say that
$\m$ is $\sigma$-invariant if $\m([\a])=\m(\sigma^{-1}([\a]))$ for all finite words $\a\in \A^*$. We
call a probability measure $\m$ ergodic if $\sigma^{-1}(A)=A$ implies $\m(A)=0$ or $\m(A)=1$. Given
a $\sigma$-invariant, ergodic probability measure $\m$, we define the measure theoretic entropy of
$\m$ to be 
\[
h(\m):=\lim_{k\to\infty}\frac{-\sum_{\a\in \A^k}\m([\a])\log \m([\a])}{k}.
\]
Note that this limit always exists. We say that a probability measure $\m$ is slowly decaying if 
\[
c_{\m}:={\textrm{ess}\inf}\inf_{k\in\mathbb{N}}\frac{\m([a_1,\ldots,a_{k+1}])}{\m([a_1,\ldots,a_k])}>0.
\]
If $\m$ is slowly
decaying, then clearly for $\m$-almost every $\a\in \A^{\mathbb{N}}$ we have
\[
\frac{\m([a_1,\ldots,a_{k+1}])}{\m([a_1,\ldots,a_k])}\geq c_{\m}
\]
for all $k\in\mathbb{N}$.
Specific examples of slowly decaying measures include Bernoulli measures, and Gibbs measures for
H\"older continuous potentials (see \cite{Bow}). If $\m$ is a slowly decaying probability measure with $c_\m$ defined as above, then for each
$n\in\mathbb{N}$ we define the \emph{level set}
\begin{equation}\label{eq:levelDef}
L_{\m,n}:=\{\a\in\A^*: \m([a_1,\ldots,a_{|\a|}])\leq
c_{\m}^n<\m([a_1,\ldots,a_{|\a|-1}])\}.
\end{equation}
The elements of $L_{\m,n}$ are disjoint and the union of
their cylinders has full $\m$ measure. It follows from the slowly decaying property that cylinders
corresponding to elements of $L_{\m,n}$ have comparable measure up to a multiplicative constant.
Note that when $\m$ is the uniform $(\frac{1}{\#\A},\ldots,\frac{1}{\#\A})$-Bernoulli measure the
set $L_{\m,n}$ is simply $\A^n$.

Given $\b\in\A^{\mathbb{N}},$ a slowly decaying probability measure $\m,$ $\omega\in \Omega,$ and
$g:\mathbb{N}\to[0,\infty),$ we let
\[
U_{\omega}(\b,\m,g):=\left\{x\in \mathbb{R}^d:|x-\Pi_{\omega}(\a\b)|\leq (\m([\a])g(n))^{1/d}
\textrm{ for some } \a\in L_{\m,n}\text{ for i.m.\ }n\right\}.
\]
The sets $U_{\omega}(\b,\m,g)$ are the auxiliary sets that will allow us to deduce metric statements about certain
$W_{\omega}(\b,\Psi)$ for particular choices of $\Psi$ (see Corollary \ref{Bernoulli corollary}
below). The property of those $\Psi$ that allows us to use the sets $U_{\omega}(\b,\m,g)$ is
described in the following definition. Given a slowly decaying probability measure $\m$ and
$g:\mathbb{N}\to[0,\infty),$ we say that a function $\Psi$ is equivalent to $(\m,g)$ if
\[
\Psi(\a)\asymp(\m([\a])g(n))^{1/d}
\]
for all $\a\in L_{\m,n}$. 

If $\sum_{n=1}^{\infty}g(n)<\infty$ then it can be shown that $U_{\omega}(\b,\m,g)$ has zero Lebesgue measure for any choice of $\b,\m,$ and $\omega$. As such, to prove a Khintchine type theorem it is necessary to include a divergence assumption for the function $g$. In our results, the divergence assumption will be that $g$ is an element of $G$ (see \eqref{H functions}). 
\subsection{Statement of results}
To state our main result we require the following definitions. 
\begin{definition}
	We say that our random iterated function system is \textbf{\textit{non-singular}} if there exists $C>0$
	such that for all $i\in \A$, $x\in \cup_{\omega\in \Omega} \Pi_{\omega}(\A^{\mathbb{N}})$,
	and balls $B(y,r),$ we have 
	\[
	\eta_i(A\in\Omega_i\;:\;A\cdot x \in B(y,r) ) \leq C\cdot r^d.
	\]
\end{definition}
\begin{definition}
	We say that a random iterated function system is \textbf{\textit{distantly non-singular}} if there exists $C>0$
	such that for all $i\in \A$, $x\in \bigcup_{\omega\in \Omega} \Pi_{\omega}(\A^{\mathbb{N}})$, and $y\in \mathbb{R}^d\setminus B(0,\frac{\min_{i\neq j}|t_i-t_j|}{8})$, we have
	\[
	\eta_i(A\in\Omega_i\;:\;A\cdot x \in B(y,r) ) \leq C\cdot r^d.
	\]
\end{definition}
Note that the distantly non-singular condition only considers balls that are not ``too
near'' the origin, whereas the non-singular condition considers any ball.
Thus, being distantly non-singular is a weaker condition than being non-singular. We will use the distantly non-singular condition when dealing with similarities, and the
non-singular condition when dealing with affinities.
In the latter case we
will use an equivalent but more natural definition, defined in terms of ellipses.
\begin{definition}
	A random iterated function system is \textbf{\textit{non-singular}} if there exists $C>0$ such that for
	all $i\in\A$, $x\in \cup_{\omega\in\Omega}\Pi_\omega(\A^{\mathbb{N}})$, and ellipse $E\subset
	\R^d$, we have 
	\[
	\eta_i(A\in\Omega_i\;:\; A\cdot x \in E) \leq C \cdot \mathrm{Vol}(E).
	\]
\end{definition}
The equivalence of these two definitions can easily be seen by considering the special ellipse of the form $B(y,r)$ and by
covering the ellipse by small balls.

Given a RIFS $(\{\Omega_i\}_{i\in \A},\{\eta_i\}_{i\in \A},\{t_i\}_{i\in \A})$ and a
probability measure $\m$ on $\A^{\N}$ we associate the quantities
\[
\lambda'(\eta_i):=-\int_{\Omega_i}\log (|\Det(A)|)\,d\eta_i(A)
\]
and
\[
\lambda(\eta,\m):=\sum_{i\in \A}\m([i])\cdot \lambda'(\eta_i).
\]
We call $ \lambda(\eta,\m)$ the Lyapunov exponent of our RIFS with respect to $\m$.
We will make the running assumption throughout this paper that $\lambda'(\eta_i)\in\R$ and that the
logarithmic moment condition
\begin{equation}\label{eq:CramerAss}
  \log\int_{\Omega_i}\exp\left(s\log|\Det(A)|\right)\,d\eta_i(A)
  =\log\int_{\Omega_i}|\Det(A)|^s d\eta_i(A)<\infty
\end{equation}
is satisfied for all $i\in\A$ and $s\in\R$ with $|s|$ sufficiently small.
This assumption is made solely for the purpose of using Cram\'er's theorem on large deviations in
the proof of Theorem \ref{Main theorem}, and other suitable generalisations may be made. In
particular, this assumption is trivially satisfied if there exists $c>0$ such that $|\Det(A)|\geq
c>0$. We also note that the moment condition directly implies $\lambda'(\eta_i)\in\R$.

\clearpage
We are now in a position to state our main result.
\begin{theorem}
	\label{Main theorem}
	Let $(\{\Omega_i\}_{i\in \A},\{\eta_i\}_{i\in \A},\{t_i\}_{i\in \A})$ be a RIFS and assume one of
	the following:
	\begin{itemize}
		\item[A.]
		Assume $\Omega_i\subset S_d$ for all $i\in \A$ and that the RIFS is distantly non-singular.
		\item[B.]
		Assume $\Omega_i\subset M_d$ for all $i\in\A$ and that the RIFS is non-singular.
	\end{itemize}
	Suppose $\m$ is a slowly decaying $\sigma$-invariant ergodic probability measure such that $\frac{h(\m)}{\lambda(\eta,\m)}>1$. Then the following statements hold:
	\begin{enumerate}
		\item For any $\b\in \A^{\mathbb{N}},$ for
		$\eta$-almost every $\omega\in \Omega,$ for any $g\in G$ the set $U_{\omega}(\b,\m,g)$ has
		positive Lebesgue measure.
		\item  For any $\b\in \A^{\mathbb{N}},$ for
		$\eta$-almost every $\omega\in \Omega,$ for any $\Psi:\A^{*}\to[0,\infty)$ the set $W_{\omega}(\b,\Psi)$ has positive Lebesgue measure if there exists $g\in G$ such that $\Psi$ is
		equivalent to $(\m,g)$.
	\end{enumerate}
\end{theorem}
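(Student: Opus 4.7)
The plan is to reduce Part 2 to Part 1 by a trivial inclusion and then to prove Part 1 via a Cauchy--Schwarz lower bound on $\mathrm{Leb}(E_n)$ for natural approximating sets $E_n$, with the key pair-correlation estimate supplied by the (distantly) non-singular condition together with Cram\'er's large deviation theorem for $\log|\Det|$; the final passage to the $\limsup$ set is then organised so as to exploit the defining property of the class $G$.

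\emph{Reduction.} If $\Psi$ is equivalent to $(\m,g)$ with $g\in G$ and implicit constant $c>0$, then $\Psi(\a)\ge(\m([\a])\,c^d g(n))^{1/d}$ for every $\a\in L_{\m,n}$. Because $G$ is closed under multiplication by positive constants, $c^d g\in G$, and $U_\omega(\b,\m,c^d g)\subseteq W_\omega(\b,\Psi)$, so Part 2 follows from Part 1.

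\emph{Second moment for Part 1.} Fix $\b\in\A^{\N}$, set $r_\a:=(\m([\a])g(n))^{1/d}$ for $\a\in L_{\m,n}$, and write
\[
E_n(\omega):=\bigcup_{\a\in L_{\m,n}}B(\Pi_\omega(\a\b),r_\a),\qquad X_n(\omega,x):=\sum_{\a\in L_{\m,n}}\mathbf 1_{B(\Pi_\omega(\a\b),r_\a)}(x),
\]
so that $U_\omega(\b,\m,g)=\limsup_n E_n(\omega)$ and Cauchy--Schwarz yields $\mathrm{Leb}(E_n(\omega))\ge(\int X_n\,dx)^2/\int X_n^2\,dx$. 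The numerator is $\asymp g(n)$ from the slowly-decaying normalisation $\sum_{\a\in L_{\m,n}}\m([\a])\asymp 1$. For the denominator, expand into pairs and use the decomposition $\Pi_\omega(\a\b)=\phi_{\omega,a_1\ldots a_{|\a|-1}}(A_{\omega,\a}(\Pi_{\omega,\a}(\b))+t_{l(\a)})$. Since $A_{\omega,\a}$ is independent of all the other matrices appearing on the right, conditioning on those and applying the (distantly) non-singular condition in Case A, or its ellipsoidal version in Case B, bounds the conditional probability that $x\in B(\Pi_\omega(\a\b),r_\a)$ by $Cr_\a^d/|\Det(\widehat A_{\omega,a_1\ldots a_{|\a|-1}})|$. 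An analogous conditioning on the independent pair $(A_{\omega,\a},A_{\omega,\a'})$ for $\a\ne\a'$ yields a product bound whose geometric factor depends only on $|\a\wedge\a'|$. Grouping the pair sum by common-prefix length, using Shannon--McMillan--Breiman for $(\sigma,\m)$ to convert $\a\in L_{\m,n}$ into $|\a|\asymp n(-\log c_\m)/h(\m)$, and using Cram\'er (available via \eqref{eq:CramerAss}) to replace $|\Det|$ by its typical value $e^{-|\a|\lambda(\eta,\m)}$ on a high-probability event, turns the pair sum into a geometric series whose ratio is a power of $c_\m^{1-\lambda(\eta,\m)/h(\m)}$, which is $<1$ exactly when $h(\m)>\lambda(\eta,\m)$. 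The series therefore converges and one obtains $\int X_n^2\,dx\le C g(n)$ on an event $\mathcal E_n\subseteq\Omega$ with $\eta(\mathcal E_n)\to 1$, hence $\mathrm{Leb}(E_n(\omega))\ge cg(n)$ on $\mathcal E_n$.

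\emph{Limsup and the role of $G$.} The same pairwise estimate applied across two distinct scales $m<n$ yields the quasi-independence $\E_\omega\mathrm{Leb}(E_m\cap E_n)\ll g(m)g(n)$. A divergence-type Borel--Cantelli lemma for Lebesgue measure (Chung--Erd\H{o}s style) then gives $\mathrm{Leb}(\limsup_{n\in B}E_n(\omega))>0$ whenever $\sum_{n\in B}g(n)=\infty$ and $\omega\in\mathcal E_n$ for $n\in B$. Applying Borel--Cantelli to $\eta(\mathcal E_n^c)$ (thinning scales if necessary to obtain summability) shows that for $\eta$-a.e.\ $\omega$ the set $B_\omega:=\{n:\omega\in\mathcal E_n\}$ has upper density $\ge 1-\eps$ for every fixed $\eps>0$. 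Given any $g\in G$ there exists $\eps>0$ with $g\in G_\eps$, and the defining property of $G_\eps$ forces $\sum_{n\in B_\omega}g(n)=\infty$; this recovers the conclusion from a single $\eta$-almost sure statement that is uniform over every $g\in G$ simultaneously. The main obstacle I anticipate is the common-prefix pair estimate itself: making the large-deviation control of $|\Det(\widehat A_{\omega,\gamma})|$ uniform over all relevant prefixes $\gamma$ and scales $n,m$ while organising the geometric series is delicate, and in Case B the ellipsoidal non-singular condition requires extra bookkeeping with singular values so that volumes, not merely operator norms, are correctly controlled throughout.
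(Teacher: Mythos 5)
Your reduction of Part 2 to Part 1 and your within-scale second moment (pair correlations controlled by the non-singularity condition on a large-deviation event for $\log|\Det|$) are sound and essentially parallel the paper's Proposition on close pairs and its transversality lemma. The genuine problems arise in your final ``limsup'' stage. First, the cross-scale quasi-independence $\E_\omega\,\mathrm{Leb}(E_m\cap E_n)\ll g(m)g(n)$ is not obtained by ``the same pairwise estimate'': the transversality bound you (and the paper) prove applies only to pairs $\a,\a'$ with \emph{neither a prefix of the other}, since its proof peels off the common prefix and exploits that the next letters differ, giving separation $|t_{c_1}-t_{c_1'}|$. Within a single level $L_{\m,n}$ this is automatic because level sets are antichains, which is exactly why the paper never needs cross-scale comparisons. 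Across two levels $m<n$, essentially every $\a'\in L_{\m,n}$ extends some $\a\in L_{\m,m}$, so prefix pairs are not exceptional but generic; for them the relevant difference is $\wA_{\omega,\a}\bigl(\Pi_{\omega,\a}(\b)-\Pi_{\omega,\a}(\c\b)\bigr)$ with $\a'=\a\c$, and one must peel further along the \emph{fixed} word $\b$ (the common prefix of $\b$ and $\c\b$ can be long), combine this with the entropy bounds defining $L_{\m,n,\eps_1}$, and sum a new geometric series. None of this is supplied by your sketch, and without it the Chung--Erd\H{o}s input is unproved. The paper sidesteps the whole issue by feeding only the $g$-free, within-level separation statement into the deterministic Proposition imported from \cite{BakOver}, which handles the limsup structure.

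Second, there is a quantifier problem with ``for $\eta$-a.e.\ $\omega$, for all $g\in G$''. Your good events $\mathcal E_n$ (where $\int X_n^2\le Cg(n)$) and the sets $E_n$ themselves depend on $g$ through the radii $(\m([\a])g(n))^{1/d}$, so applying Borel--Cantelli ``to $\eta(\mathcal E_n^c)$'' produces an exceptional $\omega$-set for each $g$, and $G$ is uncountable; moreover $\eta(\mathcal E_n)\to1$ without a summable rate does not give, even for fixed $g$, that $B_\omega=\{n:\omega\in\mathcal E_n\}$ has upper density $\ge 1-\eps$ almost surely (thinning the scales destroys exactly the density property that membership in $G_\eps$ requires). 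The paper resolves both points by making the random statement $g$-independent --- existence, at a set of scales of upper density $\ge1-\eps$, of an $\frac{s}{\#L_{\m,n}^{1/d}}$-separated subset of $\{\Pi_\omega(\a\b)\}_{\a\in L_{\m,n}}$ of cardinality $\asymp\#L_{\m,n}$ --- via a Markov-plus-Fatou argument (its Proposition on full measure), and only then quantifying over $g\in G$ through the deterministic proposition. To repair your route you would need to reformulate $\mathcal E_n$ as a $g$-free pair-counting/separation event and reprove the density statement by the Fatou argument, at which point you are essentially reconstructing the paper's proof, plus the extra prefix-pair estimate your cross-scale step demands.
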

When restricting to Bernoulli probability measures, the second statement from Theorem \ref{Main
	theorem} implies the following corollary.
\begin{corollary}
	\label{Bernoulli corollary}
	Let $(\{\Omega_i\}_{i\in \A},\{\eta_i\}_{i\in \A},\{t_i\}_{i\in \A})$ be an RIFS and assume one of
	the following.
	\begin{itemize}
		\item[A.] Assume $\Omega_i\subset S_d$ for all $i\in \A$ and that the RIFS is distantly
		non-singular. 
		\item[B.] Assume $\Omega_i\subset M_d$ for all $i\in \A$ and that the RIFS is 
		non-singular. 
	\end{itemize}
	Let $(p_i)_{i\in \A}$ be a probability vector satisfying
	$\frac{-\sum_{i\in \A} p_i\log p_i}{ \sum_{i\in \A}p_i\cdot \lambda'(\eta_i)}>1$. Then for any
	$\b\in \A^{\mathbb{N}},$ for $\eta$-almost every $\omega\in \Omega,$ the set 
	\[
	\left\{x\in
	\R^d:|x-\Pi_{\omega}(\a\b)|\leq \left(\frac{\prod_{k=1}^{|\a|}p_{a_k}}{|\a|}\right)^{1/d}\textrm{ for i.m.
	}\a\in \A^*\right\}
	\]
	has positive Lebesgue measure. 
\end{corollary}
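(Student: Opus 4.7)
The plan is to deduce the corollary directly from part (2) of Theorem \ref{Main theorem} by letting $\m$ be the Bernoulli product measure on $\A^{\mathbb{N}}$ with weights $(p_i)_{i\in\A}$ and taking $g(n)=1/n$. First I verify that $\m$ meets the hypotheses of the theorem: it is $\sigma$-invariant and ergodic by standard facts for Bernoulli shifts, its entropy is $h(\m)=-\sum_{i\in\A}p_i\log p_i$, and since $\m([i])=p_i$ we have
\[
\lambda(\eta,\m)=\sum_{i\in\A}p_i\lambda'(\eta_i),
\]
so the ratio $h(\m)/\lambda(\eta,\m)>1$ reproduces exactly the hypothesis of the corollary. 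The measure is slowly decaying because for any $a_1\ldots a_{k+1}\in\A^{*}$,
\[
\frac{\m([a_1,\ldots,a_{k+1}])}{\m([a_1,\ldots,a_k])}=p_{a_{k+1}}\geq \min_{i\in\A}p_i,
\]
so $c_{\m}=\min_{i\in\A}p_i>0$ (assuming each $p_i>0$, which we may).

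Next I analyse the geometry of the level sets $L_{\m,n}$. Set $p^{*}:=\max_{i\in\A}p_i$; we may assume $p^{*}<1$ since otherwise the alphabet is trivial. For $\a\in L_{\m,n}$ the definition \eqref{eq:levelDef} together with $\m([\a])=\prod_{k=1}^{|\a|}p_{a_k}$ gives $c_{\m}^{n+1}<\m([\a])\leq c_{\m}^{n}$. Taking logarithms of $\m([\a])\leq c_{\m}^{n}$ and using $\log p_{a_k}\geq \log c_{\m}$ yields $|\a|\log c_{\m}\leq n\log c_{\m}$, which, since $\log c_{\m}<0$, gives $|\a|\geq n$. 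Taking logarithms of $\m([\a])>c_{\m}^{n+1}$ and using $\log p_{a_k}\leq \log p^{*}$ yields $|\a|\log p^{*}>(n+1)\log c_{\m}$, and dividing by $\log p^{*}<0$ gives $|\a|<(n+1)\log c_{\m}/\log p^{*}$. Hence
\[
n\leq |\a|\leq (n+1)\frac{\log c_{\m}}{\log p^{*}},
\]
so $|\a|\asymp n$ for $\a\in L_{\m,n}$, with implicit constants depending only on $(p_i)_{i\in\A}$.

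Finally, the function $g(n):=1/n$ belongs to the class $G$ of \eqref{H functions}, as noted right after its definition. Setting
\[
\Psi(\a):=\left(\frac{\prod_{k=1}^{|\a|}p_{a_k}}{|\a|}\right)^{1/d},
\]
for $\a\in L_{\m,n}$ the comparability $|\a|\asymp n$ immediately gives
\[
\Psi(\a)^{d}=\frac{\m([\a])}{|\a|}\asymp \frac{\m([\a])}{n}=\m([\a])\,g(n),
\]
so $\Psi$ is equivalent to $(\m,g)$ in the sense defined in the paper. Part (2) of Theorem \ref{Main theorem} then delivers the conclusion. I do not foresee any genuine obstacle here: once one identifies the correct $\m$ and $g$, the entire argument reduces to the elementary fact that on level sets of a Bernoulli measure the word length is linearly comparable to the level index.
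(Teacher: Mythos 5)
Your proposal is correct and follows essentially the same route as the paper: take $\m$ to be the Bernoulli measure for $(p_i)$, take $g(n)=1/n$, show $|\a|\asymp n$ on the level sets $L_{\m,n}$ (the paper does this via $(\min_i p_i)^{|\a|}\leq\m([\a])\leq(\max_i p_i)^{|\a|}$ together with $\m([\a])\asymp c_{\m}^n$, which is the same computation as your logarithm argument), deduce that $\Psi$ is equivalent to $(\m,g)$, and invoke part (2) of Theorem \ref{Main theorem}. Your explicit verification of the hypotheses on $\m$ and the $d$-th root bookkeeping are fine additions but not a different argument.
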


By the compactness of $F_{\omega}$ it follows that $U_{\omega}(\b,\m,g)$ is a subset of $F_{\omega}$
whenever $g$ is bounded and $\m$ is non-atomic. Therefore Theorem \ref{Main theorem} immediately implies
the following result which can be seen to generalise the work of Peres,
Simon, and Solomyak \cite{PSS} for $1$-variable RIFS in $\R$ and the work of Koivusalo \cite{Koivusalo13}.
\begin{corollary}
	\label{measure corollary}
	Let $(\{\Omega_i\}_{i\in \A},\{\eta_i\}_{i\in \A},\{t_i\}_{i\in \A})$ be a RIFS and assume one of the following:
	\begin{itemize}
		\item[A.]  Assume $\Omega_i\subset S_d$ for all $i\in \A$ and that the RIFS is distantly 
		non-singular.
		\item[B.]  Assume $\Omega_i\subset M_d$ for all $i\in \A$ and that the RIFS is 
		non-singular.
	\end{itemize}
	If there exists a slowly decaying $\sigma$-invariant ergodic probability measure $\m$
	satisfying $\frac{h(\m)}{ \lambda(\eta, \m)}>1,$ then for $\eta$ almost every $\omega\in \Omega$
	the set $F_{\omega}$ has positive Lebesgue measure.
\end{corollary}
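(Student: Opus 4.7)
The plan is to deduce the corollary directly from part (1) of Theorem \ref{Main theorem}, via the inclusion $U_\omega(\b,\m,g) \subseteq F_\omega$ that is asserted in the paragraph just before the statement. Concretely, we exhibit a bounded function $g \in G$, verify that $\m$ is non-atomic, and then conclude from Theorem \ref{Main theorem}(1) that $U_\omega(\b,\m,g)$, and hence $F_\omega$, has positive Lebesgue measure for $\eta$-almost every $\omega$.

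For the preliminaries, the paper has already noted that $g(n)=1/n$ lies in $G$, and this function is bounded above by $1$. For the non-atomicity, recall that each $\Omega_i$ consists of matrices whose operator norms are uniformly bounded away from $1$; this forces $|\Det(A)|$ to be bounded away from $1$ on $\Omega_i$, so that $\lambda'(\eta_i) > 0$ and consequently $\lambda(\eta,\m) > 0$. The hypothesis $h(\m)/\lambda(\eta,\m) > 1$ then yields $h(\m) > 0$. Since any $\sigma$-invariant ergodic probability measure with a point atom is supported on a periodic orbit and so has zero entropy, $\m$ must be non-atomic.

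Fix any $\b \in \A^{\N}$ and set $g(n) = 1/n$. By Theorem \ref{Main theorem}(1), there is an $\eta$-full-measure set of $\omega \in \Omega$ for which $U_\omega(\b,\m,g)$ has positive Lebesgue measure. For any such $\omega$ and any $x \in U_\omega(\b,\m,g)$, by definition there exist arbitrarily large $n$ and corresponding words $\a \in L_{\m,n}$ with $|x - \Pi_\omega(\a \b)| \leq (\m([\a])\, g(n))^{1/d}$. By the slowly decaying property $\m([\a]) \leq c_\m^{n} \to 0$, and boundedness of $g$ then forces this upper bound to tend to $0$. Thus $x$ is a limit of points $\Pi_\omega(\a\b) \in F_\omega$, and compactness of $F_\omega$ gives $x \in F_\omega$. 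Hence $F_\omega$ has positive Lebesgue measure for $\eta$-almost every $\omega$. As this argument is essentially a direct translation of Theorem \ref{Main theorem}, no serious obstacle is expected; the only subtlety is the non-atomicity verification just carried out.
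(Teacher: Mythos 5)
Your proposal is correct and follows exactly the route the paper itself takes: the remark preceding the corollary (that $U_{\omega}(\b,\m,g)\subseteq F_{\omega}$ for bounded $g$ and non-atomic $\m$, by compactness of $F_\omega$) combined with Statement 1 of Theorem \ref{Main theorem} applied to $g(n)=1/n\in G$. You merely spell out details the paper leaves implicit, namely the non-atomicity of $\m$ via $h(\m)>\lambda(\eta,\m)>0$ and the limiting argument showing points of $U_\omega(\b,\m,g)$ lie in $F_\omega$, which is a welcome addition but not a different method.
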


The rest of the paper is organised as follows. In Section \ref{technical} we prove several technical results that will enable us to prove Theorem \ref{Main theorem} in Section \ref{proof section}. In Section \ref{sec:Corollary} we demonstrate how Corollary \ref{Bernoulli corollary} follows from Theorem \ref{Main theorem}. In Section \ref{example section} we detail some examples of RIFSs that satisfy either assumption A or assumption B from the statement of our results. Finally in Section \ref{discussion section} we make some concluding remarks.

\section{Technical results}
\label{technical}
In this section we prove a number of technical results that will enable us to prove Theorem \ref{Main theorem}. In the first subsection we prove Proposition \ref{important properties prop}. This proposition allows us to assert that for $\eta$ almost every $\omega\in \Omega,$ for $n$ sufficiently large there exists a large subset $\widetilde{L}_{\m,n,\eps_{1}}(\omega)$ contained in $L_{\m,n}$ for which each element satisfies good determinant bounds and good measure decay bounds that are described in terms of a parameters $\eps_{1}>0$. In the second subsection we prove Lemma \ref{transversality lemma rework}. This lemma provides a good upper bound for the probability that two projections are close to each other. In the final subsection we recall some general results from \cite{BakOver} and \cite{BerVel2} which can be used to ensure that a limsup set has positive Lebesgue measure.

\subsection{Constructing $\widetilde{L}_{\m,n,\eps_{1}}$}
Given a RIFS and a slowly decaying $\sigma$-invariant ergodic probability measure $\m$, recall that the Lyapunov exponent of our RIFS with respect to $\m$ is
\[
\lambda(\eta,\m)= \sum_{i\in\A}\m([i])\lambda'(\eta_i)=-\sum_{i\in \A}\m([i])\int_{\Omega_i}\log |\Det(A)|d\eta_i(A),
\]
and the entropy of $\m$ is given by
\[
h(\m) = \lim_{n\to\infty}\frac{-\sum_{\a\in\A^n}\m([\a])\log\m([\a])}{n}.
\]
The Shannon-McMillan-Breiman theorem tells us that for $\m$-almost every $\a\in \A^{\N}$ we have 
\[
\lim_{n\to\infty}\frac{-\log \m([a_1,\ldots,a_n])}{n}=h(\m).
\] We will combine this statement with Egorov's theorem to obtain uniform estimates on the measures of cylinders. The first step in our proof of Proposition \ref{important properties prop} is the following proposition which states that for $\eta$ almost every $\omega$ there is a large subset of $\A^{\N}$ on which we have good determinant bounds. 

\begin{proposition}
	\label{tree prop}
Fix a RIFS and a $\sigma$-invariant ergodic probability measure $\m.$ Then for any $\eps_1>0,$ there exists $C=C(\m,\eta,\eps_1)>0$ such that for $\eta$ almost every
	$\omega\in\Omega,$ there exists $N=N(\omega)\in \mathbb{N}$ such that\footnote{The lower
	bound of $13/16$ is arbitrary and can be replaced by any value less than $1$.} 
	\[
	\m\left (\a\in
	\A^{\mathbb{N}}: |\Det(\widehat{A}_{\omega,a_1\ldots
		a_n})|\in\left(\frac{e^{-n(\lambda(\eta,\m)+\eps_1)}}{C},Ce^{-n(\lambda(\eta,\m)-\eps_1)}\right)\textrm{
		for all }n\geq N\right)>\frac{13}{16}.
	\]
\end{proposition}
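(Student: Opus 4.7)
The plan is to establish that $Y_n(\omega,\a)/n \to -\lambda(\eta,\m)$ for $(\eta\times\m)$-a.e.~$(\omega,\a)$, where $Y_n(\omega,\a):=\log|\Det(\widehat{A}_{\omega,a_1\ldots a_n})|$, and then to convert this convergence into the required uniform $\m$-measure statement by Fubini and continuity of measure. For fixed $\a$, the decomposition
\[
  Y_n(\omega,\a) = \sum_{k=1}^{n}\log|\Det(A_{\omega,a_1\ldots a_k})|
\]
expresses $Y_n$ as a sum of independent random variables under $\eta$ (the indexing words $a_1,\,a_1 a_2,\ldots$ are distinct), where the $k$-th summand has the distribution of $\log|\Det(A)|$ for $A\sim\eta_{a_k}$, and hence mean $-\lambda'(\eta_{a_k})$. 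They are however not identically distributed, which is the principal technical nuisance.

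My plan is to regroup the sum by last letter, $Y_n = \sum_{i\in\A}S_i(\omega,\a,n)$, so that each $S_i$ is an i.i.d.~sum of $n_i(\a,n):=|\{1\le k\le n:a_k=i\}|$ copies of $\log|\Det(A)|$ with $A\sim\eta_i$. The moment condition~\eqref{eq:CramerAss} is precisely what is needed to invoke Cram\'er's theorem on large deviations for each such subsum, giving for every $i\in\A$ and $\eps>0$ a constant $c_i(\eps)>0$ with
\[
  \eta\bigl( |S_i(\omega,\a,n)/n_i(\a,n)+\lambda'(\eta_i)| > \eps \bigr) \le e^{-c_i(\eps)\,n_i(\a,n)}.
\]
Birkhoff's ergodic theorem applied to $\m$ gives $n_i(\a,n)/n\to\m([i])$ for $\m$-a.e.~$\a$, and Egorov's theorem with a small $\delta$ (say $\delta:=1/32$) yields a regular set $E\subseteq\A^{\N}$ with $\m(E)>1-\delta$ on which these convergences are simultaneously uniform for every $i\in\A$. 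For $\a\in E$ and $n\ge N_0(\eps_1,\delta)$ one then has $n_i(\a,n)\ge\m([i])n/2$ whenever $\m([i])>0$, and $\sum_i(n_i/n)\lambda'(\eta_i)$ lies within $\eps_1/2$ of $\lambda(\eta,\m)$; summing the Cram\'er bounds over the finite alphabet produces
\[
  \eta\bigl( |Y_n(\omega,\a)/n + \lambda(\eta,\m)| > \eps_1 \bigr) \le K e^{-cn}
\]
with $K,c>0$ uniform in $\a\in E$ and $n\ge N_0$.

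An application of Borel--Cantelli then shows that for each $\a\in E$, $\eta$-almost surely the displayed event happens for only finitely many $n$. Fubini therefore gives, for $\eta$-a.e.~$\omega$, that the $\m$-measure of $\a\in E$ satisfying the ``eventually $|\cdot|\le\eps_1$'' property equals $\m(E)>1-\delta$. Fix such an $\omega$ and define
\[
  B_N(\omega) := \bigl\{\a\in\A^{\N}:\, |Y_n(\omega,\a)/n + \lambda(\eta,\m)| \le \eps_1 \text{ for all } n\ge N\bigr\}.
\]
The sets $B_N(\omega)$ are increasing in $N$ and their union has $\m$-measure $>1-\delta>13/16$, so continuity of measure provides $N(\omega)$ with $\m(B_{N(\omega)}(\omega))>13/16$. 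Exponentiating the defining inequality then places $|\Det(\widehat{A}_{\omega,a_1\ldots a_n})|$ inside $(e^{-n(\lambda(\eta,\m)+\eps_1)},e^{-n(\lambda(\eta,\m)-\eps_1)})$ for every $n\ge N(\omega)$, which sits inside the interval of the proposition with any $C\ge 1$.

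The main obstacle is the non-identical distribution of the summands $\log|\Det(A_{\omega,a_1\ldots a_k})|$: they depend on $\a$ through $\eta_{a_k}$, so neither the classical SLLN nor Cram\'er's theorem applies directly to $Y_n$. The decomposition by last letter reduces each piece to an i.i.d.~sum (where Cram\'er works), and the simultaneous control of the letter frequencies needed to aggregate uniformly in $\a$ is supplied by Birkhoff plus Egorov. A secondary subtlety — promoting the ``eventually'' statement from $\m$-a.e.~$\a$ to a single $N(\omega)$ valid on a set of $\a$ of $\m$-measure $>13/16$ — is exactly what Fubini together with continuity of measure delivers above.
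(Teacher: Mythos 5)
Your proof is correct, and its first half is essentially the paper's argument: both decompose $\log|\Det(\wA_{\omega,a_1\ldots a_n})|$ into independent summands indexed by the prefixes of $\a$, regroup by last letter into i.i.d.\ blocks, apply Cram\'er's theorem (using \eqref{eq:CramerAss}) to each block, and use Birkhoff plus Egorov to control letter frequencies uniformly on a set of $\m$-measure close to $1$, yielding a bound of the form $Ke^{-cn}$, uniform over that set, for the probability that the time-$n$ determinant is irregular. Where you genuinely diverge is the globalization step. The paper works level by level: a second Egorov set with uniform Shannon--McMillan--Breiman bounds gives the cardinality estimate $\#\Sigma^{*}_{\m,n}\ll e^{n(h(\m)+\eps_3)}$, Markov's inequality plus Borel--Cantelli over levels controls the number of irregular words of each length, and the entropy bounds convert that count back into $\m$-measure, with a geometric series making the total bad measure less than $1/16$. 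You instead fix $\a$, apply Borel--Cantelli in $\omega$ (summability of $Ke^{-cn}$), and use Fubini to conclude that for $\eta$-a.e.\ $\omega$ almost every $\a$ in the Egorov set is eventually regular; continuity of measure along the increasing sets $B_N(\omega)$ then extracts a single $N(\omega)$ capturing measure $>13/16$. Your route is shorter and avoids entropy/SMB considerations entirely, which the statement indeed does not require; the paper's counting route produces level-wise control on the number of irregular words as an intermediate step, but only the measure statement is used downstream, so nothing is lost by your shortcut. Two small points to tidy: letters with $\m([i])=0$ should be discarded at the outset (by $\sigma$-invariance, $\m$-almost every sequence never contains such a letter, so the Egorov set can be taken inside this full-measure set); otherwise $n_i(\a,n)$ need not grow linearly and the per-letter Cram\'er bound is not exponentially small in $n$ --- the paper makes the same implicit reduction when it asserts a linear lower bound on $|\b_i|$. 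And at the end take $C>1$ (say $C=2$) rather than $C\geq 1$, since the interval in the proposition is open.
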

\begin{proof}
We fix a RIFS, a $\sigma$-invariant ergodic probability measure $\m,$ and let $\epsilon_1>0$. Let $\eps_2=\eps_2(\eps_1)>0$ be sufficiently small such that 
	\begin{equation}
	\label{eps2}
	\eps_2\left( 1+\sum_{i\in\A}\lambda'(\eta_i) \right)<\eps_1.
	\end{equation}
	By an application of the Birkhoff Ergodic theorem and Egorov's theorem, there exists $C_1=C_1(\m,\eps_{2})>1$ such that if we let $$\Sigma_{\m}:=\left\{\a\in \A^\mathbb{N}:\frac{e^{n(\m([i])-\eps_2)}}{C_1}\leq e^{|(a_k)_{k=1}^{n}|_{i}}\leq C_1e^{n(\m([i])+\eps_2)}\, \textrm{ for all }i\in \A,\, n\in \N\right\}$$ then 
	\begin{equation}
	\label{measure bound}
	\m(\Sigma_{\m})> \frac{15}{16}.
	\end{equation} 
	For each $n\in \mathbb{N},$ let
	\[
	\Sigma_{\m,n}:=\{\a\in \A^{n}: [a_1 \ldots  a_n]\cap \Sigma_{\m}\neq \emptyset\}
	\]
	be the words of length $n$ with ``good'' digit frequencies.
	
	 We split the remainder of our proof into two parts. In the first part we obtain an exponential upper bound for the probability that for a specific $\a\in \Sigma_{\m,n}$ the determinant of $\wA_{\omega,\a}$ behaves poorly. In the second part we use this bound to show that for almost every $\omega\in \Omega,$ there exists a large subset of $\A^{\N}$ upon which the determinant behaves well.  \\

	\noindent \textbf{Part 1: $|\Det(\wA_{\omega,\a})|$ is regular with high probability.}
	Let us temporarily fix some element $\a=a_1\dots a_n\in \Sigma_{\m,n}$.
	We want to obtain a good upper bound for the
	probability that 
	\[
	|\Det(\wA_{\omega,a_1\ldots a_n})|\notin
	\left(\frac{e^{-n(\lambda(\eta,\m)+\eps_1)}}{C_2},C_2e^{-n(\lambda(\eta,\m)-\eps_1)}\right),
	\]
	for some $C_2>0$.
	Since the determinant is multiplicative, and $A_{\omega,a_1\dots a_i}$ is independent of $A_{\omega,a_1\dots a_j}$
	for $i\neq j$, we can break up the determinant into the $\#\A$ different contributions coming from each of the
	probability measures $\eta_i$. This means that there exists $\#\A$ words $\b_1,\ldots,\b_{\#\A}$
	consisting solely of the digits $1,\dots,\#\A$ respectively, such that 
	\begin{align}
	|\Det(\wA_{\omega,\a})|&=\prod_{i\in \A}|\Det(\wA_{\b_{i}})|\nonumber\\
	\intertext{and}
	|\b_i|_i=|\b_{i}|&=|(a_1 \ldots a_n)|_{i} \textrm{ for all }i\in \A.\nonumber
      \end{align}Moreover, each element of the word $\b_i$ is chosen independently each with respect to the
	probability measure $\eta_i$.
	Therefore it follows from Cram\'er's theorem on large deviations and our assumption
	\eqref{eq:CramerAss}, that for each $i\in \A$ there exists
	$\rho_i=\rho_i(\eps_1,\eta_i)\in(0,1)$ and $C_3=C_3(\eps_1,\eta_i)>0$ such that
	\[
	\eta\left(\omega:|\Det(\wA_{\b_i})|\notin\left(e^{-|\b_i|(\lambda'(\eta_i)+\eps_2)},e^{-|\b_i|(\lambda'(\eta_i)
		-\eps_2)}\right)\right)\leq C_3\rho_i^{|\b_i|}.
	\]
	Given the finiteness of $\A$ and the fact that $\a\in\Sigma_{\m,n}$, which implies a lower bound for $|\b_i|$ in terms of a constant times $n$, one can derive a uniform exponential bound in $n$. In particular, there exists
	$\rho=\rho(\eps_1, \eta,\m)\in(0,1)$ and $C_4=C_4(\eps_1,\eta,\m)$ such that 
	\[
	\eta\left(\omega:|\Det(\wA_{\b_i})|\notin \left(e^{-|\b_i|(\lambda'(\eta_i)+\eps_2)},e^{-|\b_i|(\lambda'(\eta_i)
		-\eps_2)}\right)\right)\leq C_4 \rho^{n} \textrm{ for all }i\in \A.
	\]
	For each $\a\in \Sigma_{\m,n}$ consider the event 
	\[
	E_{\a}=\left\{\omega\in\Omega\;:\; |\Det(\wA_{\omega,\a})| = \prod_{i\in \A}|\Det(\wA_{\b_i})|\notin
	\left(\prod_{i\in\A}e^{-|\b_i|(\lambda'(\eta_i)+\eps_2)},
	\prod_{i\in\A}e^{-|\b_i|(\lambda'(\eta_i)-\eps_2)}\right)\right\}.
	\]
	Clearly, if $\omega\in E_{\a}$ then $|\Det(\wA_{\b_i})|\notin
	\left(e^{-|\b_i|(\lambda'(\eta_i)+\eps_2)},e^{-|\b_i|(\lambda'(\eta_i)
		-\eps_2)}\right)$ for some $i$ and therefore
	\[
	\eta(E_{\a}) \leq \#A\cdot C_4\rho^n.
	\]
	Let $C_2 = C_2(\eta)>0$ be such that
	\begin{equation}\label{eq:constBound}
	C_2 \geq\max\left\{ e^{-\#\A\log C_1\sum_i\lambda'(\eta_i)}, e^{\#\A \log C_1\sum_{i}\lambda'(\eta_i)} \right\}.
	\end{equation}
	Manipulating the lower bound we obtain that for each $\a\in \Sigma_{\m,n}$ we have
	\begin{align*}
	\prod_{i\in\A}e^{-|\b_i|(\lambda'(\eta_i)+\eps_2)}
	&=e^{-n\eps_2}\prod_{i\in\A}e^{-|\a|_i\lambda'(\eta_i)}\\
	&\geq e^{-n\eps_2}\prod_{i\in\A} e^{-\lambda'(\eta_i)(n(\m([i])-\eps_2)-\log C_1)}\\
	&\geq e^{\#\A\log C_1 \cdot \sum_i\lambda'(\eta_i)}
	\exp\left( -n\left(\eps_2-\sum_{i\in\A}\lambda'(\eta_i)(\m([i])-\eps_2)\right) \right)\\
	&\geq C_2^{-1} \exp\left( -n\left( \eps_2+\lambda(\eta,\m)+\eps_2\sum_{i\in\A}\lambda'(\eta_i) \right)
	\right)&&\text{by \eqref{eq:constBound}}\\
	&\geq C_2^{-1} e^{-n(\lambda(\eta,\m)+\eps_1)}&&\text{by \eqref{eps2}}.
	\end{align*}
	The following upper bound for each $\a\in \Sigma_{\m,n}$ is proved similarly
	$$\prod_{i=1}^{\#\A}e^{-|\b_i|(\lambda'(\eta_i)-\eps_2)}C_2 \leq
	e^{-n(\lambda(\eta,\m)-\eps_1)}.$$
	We conclude that for each $\a\in \Sigma_{\m,n}$ the event 
	\[
	E_{\a}' = \left\{\omega\in\Omega\;:\; |\Det(\wA_{\omega,\a})| \notin \left( C_2^{-1}
	e^{-n(\lambda(\eta,\m)+\eps_1)}, C_2 e^{-n(\lambda(\eta,\m)-\eps_1)}  \right)  \right\}
	\]
	satisfies $E_{\a}'\subset E_{\a}$ and so $\eta(E_{\a}')\leq \eta(E_{\a}) \leq \#\A C_4 \rho^n$.
	In summary, we have shown that
	\begin{equation}\label{measure bound2}
	\eta\left(  \omega\in\Omega\;:\; |\Det(\wA_{\omega,\a})| \notin \left( C_2^{-1}
	e^{-n(\lambda(\eta,\m)+\eps_1)}, C_2 e^{-n(\lambda(\eta,\m)-\eps_1)}  \right) 
	\right)\leq\#\A C_4 \rho^n
	\end{equation}
	for all $\a\in\Sigma_{\m,n}$.\\

	\noindent \textbf{Part 2: Constructing a large subset of $\A^{N}$ on which the determinant is regular. }Let $\eps_3>0$ and $\theta\in(0,1)$ be such that 
	\begin{equation}\label{eq:exponentialdef}
	\rho\theta^{-1}<1\textrm{ and }\theta':=e^{2\eps_3}\theta<1.
	\end{equation}
	Combining the Shannon-McMillan-Breiman theorem, Egorov's theorem,  and \eqref{measure bound}, we may
	assert that there exists $C_{5}=C_{5}(\m,\eps_{3})>0$ such that if we let 
	\[
	\Sigma^{*}_{\m}:=\Sigma_{\m}\cap
	\left\{\a\in \A^{\N}:\frac{e^{-n(h(\m)+\eps_3)}}{C_5}\leq \m([a_1\ldots a_n])\leq
	C_{5}e^{-n(h(\m)-\eps_3)}\textrm{ for all }n\in\N\right\}
	\]
	be the set of sequences with ``good'' digit frequency and ``good'' measure decay, then 
	\begin{equation}
	\label{3/4 bound}
	\m(\Sigma_{\m}^{*})>\frac{14}{16}.
	\end{equation}
	Again we define the level sets by
	\[
	\Sigma_{\m,n}^{*}:=\{\a\in \A^n: [a_1\ldots a_n]\cap \Sigma_{\m}^{*}\neq \emptyset\}
	\]
	and we note that $\Sigma_{\m,n}^{*}\subseteq \Sigma_{\m,n}$ for all $n$.
	Therefore \eqref{measure bound2} also applies to elements of $\Sigma_{\m,n}^{*}$. 
	
	Using the measure bounds coming from the definition of
	$\Sigma_{m}^{*}$ we have the following upper bound for the
	cardinality of $\Sigma_{\m,n}^{*}$: 
	\begin{equation}
	\label{cardinality upper bound}
\#\Sigma_{\m,n}^{*}\leq C_{5}e^{n(h(\m)+\eps_3)}.
	\end{equation} 
	We can bound the expected number of words
	\[
	B_{\m,n}(\omega): = \left\{ \a\in\Sigma^*_{\m,n}\;:\;|\Det(\wA_{\omega,\a})|\notin\left(
	\frac{e^{-n(\lambda(\eta,\m)+\eps_1)}}{C_1},C_1 e^{-n(\lambda(\eta,\m)-\eps_1)} \right) \right\}
	\]
	that do
	not have good Lyapunov exponent using \eqref{measure bound2}:
	\begin{align*}
	\int_{\Omega}\#B_{\m,n}(\omega)\,d\eta =& \int_{\Omega}\#\left\{\a\in \Sigma_{\m,n}^*:|\Det(\wA_{\omega,\a})|\notin
	\left(\frac{e^{-n(\lambda(\eta,\m)+\eps_1)}}{C_2},C_2e^{-n(\lambda(\eta,\m)-\eps_1)} \right)\right\}d\eta\\
	=&\sum_{\a\in \Sigma_{\m,n}^*}\int_{\Omega}\chi\left(|\Det(\wA_{\omega,\a})|\notin
	\left(\frac{e^{-n(\lambda(\eta,\m)+\eps_1)}}{C_2},C_2e^{-n(\lambda(\eta,\m)-\eps_1)}
	\right)\right)d\eta\\
	\leq& C_{4}\#\A\sum_{\a\in \Sigma_{\m,n}^*} \rho^n\\
	\leq& C_{4}\#\A \cdot\rho^n\#\Sigma_{\m,n}^{*}.
	\end{align*} 
	By Markov's inequality, we have 
	\begin{align*}
	\eta\left(\omega:\#B_{\m,n}(\omega)\geq \#\Sigma_{\m,n}^*\cdot \theta^n \right)
	\leq &  C_{4}\#\A \cdot\rho^n\#\Sigma_{\m,n}^{*}
	\theta^{-n}(\#\Sigma_{\m,n}^*)^{-1}\\
	\leq &  C_{4}\#\A \rho^n\theta^{-n}.
	\end{align*}
	Therefore by \eqref{eq:exponentialdef}
	\[
	\sum_{n\in\N}\eta\left(\omega:\#B_{\m,n}(\omega)\geq \#\Sigma_{\m,n}^*\cdot \theta^n  \right)
	<\infty.
	\]
	It follows from the Borel-Cantelli Lemma that for $\eta$-almost every $\omega\in\Omega$ there
	exists $N=N(\omega)\in\N$ such that
	\begin{equation}\label{eq:decayingProbs}
	\#B_{\m,n}(\omega) \leq \#\Sigma_{\m,n}^* \cdot \theta^n
	\end{equation}
	for all $n\geq N$.
	It follows now from the definition of $\Sigma_{\m,n}^*$ that for $\eta$ almost every $\omega$, there
	exists $N\in \N$ such that for all $n\geq N$,
	\begin{align*}
	\m\left(\bigcup_{a_1\ldots a_n\in B_{\m,n}(\omega) }[a_1\ldots a_n]\right)&\leq C_{5}e^{-n(h(\m)-\eps_3))}\cdot \# B_{\m,n}(\omega)\\
	&\leq C_{5}e^{-n(h(\m)-\eps_3))}\cdot  \# \Sigma_{\m,n}^*\cdot\theta^n&&\text{by
		\eqref{eq:decayingProbs}}\\
	&\leq (C_{5})^{2}e^{-n(h(\m)-\eps_3))}e^{n(h(\m)+\eps_3)}\theta^n&& \eqref{cardinality upper bound}\\
	&\leq  (C_{5})^{2}e^{2n\eps_3}\theta^n\\
	&\leq  (C_{5})^{2}(\theta')^n&&\text{by \eqref{eq:exponentialdef}}.
	\end{align*}Replacing $N$ with some larger value if necessary, we may assume that 
	\[
	\sum_{n=N}^{\infty}\m\left(\bigcup_{a_1\ldots a_n\in B_{\m,n}(\omega) }[a_1\ldots a_n]\right)\leq
	\sum_{n=N}^{\infty}(C_{5})^2(\theta')^n<1/16
	\]
	holds for $\eta$-almost every $\omega\in\Omega$. Therefore, for $\eta$-almost every $\omega$ we have
	\[
	\m\left(\bigcup_{n=N}^{\infty}\bigcup_{a_1\ldots a_n\in B_{\m,n}(\omega) }[a_{1}\ldots a_{n}]\right)<\frac{1}{16}.
	\]
	Combining this inequality with \eqref{3/4 bound} we see that for $\eta$-almost
	every $\omega,$ for $N$ sufficiently large we have 
	\[
	\m\left(\Sigma_{\m}^{*}\setminus
	\bigcup_{n=N}^{\infty}\bigcup_{a_1\ldots a_n\in B_{\m,n}(\omega) }[a_{1}\ldots a_{n}]\right)>\frac{13}{16}.
	\]
	Finally, we observe that if
	\[
	\a\in \Sigma_{\m}^{*}\setminus \bigcup_{n=N}^{\infty}\bigcup_{a_1\ldots a_n\in
		B_{\m,n}(\omega) }[a_1,\ldots,a_n]
	\]
	then $\a$ satisfies
	\[
	|\Det(\wA_{\omega,a_1,\ldots,a_n})|\in \left(\frac{e^{-n(\lambda(\eta,\m)+\eps_1)}}{C_1},C_1e^{-n(\lambda(\eta,\m)-\eps_1)} \right)
	\]
	for all $n\geq N$. This completes our proof. 
\end{proof}

We now adapt Proposition \ref{tree prop} into a meaningful statement regarding the level sets $L_{\m,n}$.  Instead of dealing with $L_{\m,n}$ directly it is useful to restrict to the following large subset upon which we have strong measure decay estimates. For any slowly decaying $\sigma$-invariant ergodic probability measure $\m$ and $\eps_1>0$, we can use the Shannon-McMillan Breiman theorem and
Egorov's theorem to choose $C_2(\m,\eps_1)>0$ such that the set $L_{\m,n,\eps_1}\subseteq
L_{\m,n}$ defined as follows 
\begin{equation}
\label{uniform entropy condition}
L_{\m,n,\eps_1} := \left\{ \a\in L_{\m,n}\;:\;\frac{e^{-k(h(\m)+\eps_1)}}{C_2}\leq
\m([a_1\ldots a_k])\leq C_{2}e^{-k(h(\m)-\eps_1)} \text{ for all } 1\leq k \leq |\a|\right\}
\end{equation}
satisfies
\begin{equation}
\m\left(\bigcup_{\a\in L_{\m,n,\eps_1}}[\a]\right)>15/16.
\end{equation}

\begin{proposition}\label{important properties prop}
Fix a RIFS and a slowly decaying $\sigma$-invariant ergodic probability measure $\m.$ Then for any $\eps_1>0,$ there exists $C=C(\m,\eta,\eps_1)>0$ such that for almost every $\omega\in \Omega$, there exists
	$N_1=N_{1}(\omega)\in \N$ and $N_2=N_2(\omega)\in \N$ such that for all $n\geq N_2$ there exists
	$\widetilde{L}_{\m,n,\eps_1}(\omega)\subseteq L_{\m,n,\eps_1}$ satisfying:
	\begin{enumerate}
		\item For each $\a\in \widetilde L_{\m,n,\eps_1}(\omega)$ we have 
		\[
		|\Det(\wA_{\omega,\a})|\in \left(\frac{e^{-n(\lambda(\eta,\m)+\eps_1)}}{C},Ce^{-n(\lambda(\eta,\m)-\eps_1)} \right)
		\]
		for all $ N_1\leq n \leq |\a|$.
		\item $\# \widetilde{L}_{\m,n,\eps_1}\asymp c_{\m}^{-n}$ for all $n\geq N_2$.
	\end{enumerate}
\end{proposition}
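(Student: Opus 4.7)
The plan is to carve $\widetilde L_{\m,n,\eps_1}(\omega)$ out of $L_{\m,n,\eps_1}$ by keeping only those words whose cylinder meets the ``good determinant'' set supplied by Proposition~\ref{tree prop}. First I would apply Proposition~\ref{tree prop} with the same parameter $\eps_1$, producing for $\eta$-a.e.\ $\omega$ an integer $N_1 = N_1(\omega)$, a constant $C=C(\m,\eta,\eps_1)>0$, and the set
\[
S_\omega := \left\{\a \in \A^{\N} : |\Det(\wA_{\omega,a_1\ldots a_k})| \in \left(C^{-1}e^{-k(\lambda(\eta,\m)+\eps_1)},\, C e^{-k(\lambda(\eta,\m)-\eps_1)}\right) \text{ for all } k\geq N_1\right\}
\]
of $\m$-measure $>13/16$. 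Separately, the definition \eqref{uniform entropy condition} gives $\m\bigl(\bigcup_{\a\in L_{\m,n,\eps_1}}[\a]\bigr)>15/16$ for every $n$, so I would define
\[
\widetilde L_{\m,n,\eps_1}(\omega) := \left\{\a \in L_{\m,n,\eps_1} : [\a] \cap S_\omega \neq \emptyset\right\}.
\]

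To verify property (1), I would note that every $\a \in \widetilde L_{\m,n,\eps_1}(\omega)$ admits a continuation $\b \in [\a]\cap S_\omega$; since $a_1\ldots a_k = b_1\ldots b_k$ for $k \leq |\a|$, the determinant estimates built into the definition of $S_\omega$ immediately yield the required bounds on $\wA_{\omega,a_1\ldots a_k}$ for $N_1 \leq k \leq |\a|$.

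For property (2), the union $\bigcup_{\a\in\widetilde L_{\m,n,\eps_1}(\omega)}[\a]$ contains $S_\omega \cap \bigcup_{\a\in L_{\m,n,\eps_1}}[\a]$, which by inclusion--exclusion has $\m$-measure at least $13/16 + 15/16 - 1 = 3/4$. The slowly decaying property together with the defining inequality of $L_{\m,n}$ forces $\m([\a]) \in (c_\m^{n+1}, c_\m^n]$ for every $\a \in L_{\m,n}$, so dividing the measure lower bound by the per-cylinder upper bound (and conversely for the cardinality of $L_{\m,n}$ itself) yields
\[
\frac{3/4}{c_\m^n} \leq \#\widetilde L_{\m,n,\eps_1}(\omega) \leq \#L_{\m,n} \leq c_\m^{-n-1},
\]
both sides being $\asymp c_\m^{-n}$. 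This gives property (2) for all $n$ above some threshold $N_2(\omega)$ (in fact no restriction on $n$ is needed beyond ensuring the range $[N_1,|\a|]$ for the prefix lengths is non-vacuous, which it automatically is once $n$ exceeds an $N_2$ proportional to $N_1$, since members of $L_{\m,n,\eps_1}$ have length $\gtrsim n$ by the uniform entropy bound). The only delicate point I foresee is reconciling the prefix-length parametrisation used in Proposition~\ref{tree prop} with the level-set parametrisation indexed by $n$; since the condition demanded on $\widetilde L_{\m,n,\eps_1}(\omega)$ is precisely a prefix condition along members of $L_{\m,n,\eps_1}$, this reconciliation is essentially automatic and poses no serious obstacle.
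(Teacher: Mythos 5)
Your proposal is correct and follows essentially the same route as the paper: invoke Proposition \ref{tree prop} to get the good-determinant set of $\m$-measure $>13/16$, define $\widetilde{L}_{\m,n,\eps_1}(\omega)$ as those words of $L_{\m,n,\eps_1}$ whose cylinders meet it, read off property (1) from the prefix condition, and get property (2) from the $13/16+15/16-1$ measure bound together with $\m([\a])\asymp c_\m^{n}$ on $L_{\m,n}$. The only cosmetic difference is that the paper secures $|\a|\geq N_1$ for $\a\in L_{\m,n}$, $n\geq N_2$, via non-atomicity of $\m$ rather than the uniform entropy bound, which changes nothing.
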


\begin{proof}
	Let $\omega$ belong to the full measure set whose existence is asserted by Proposition
	\ref{tree prop}. Let $N_1=N_1(\omega)$ denote the large $N$ whose existence is also
	guaranteed by this proposition. Since $\m$ is non atomic, we may
	choose $N_2=N_{2}(\omega)$ sufficiently large such that for all $n\geq N_2$, each $\a\in
	L_{\m,n}$ satisfies $|\a|\geq N_1$. 
	
	By Proposition \ref{tree prop} we have $\m(H(\omega))>13/16$, where 
	\[
	H(\omega):=\left\{\a\in
	\A^{\mathbb{N}}: |\Det(\wA_{\omega,a_1\ldots
		a_n})|\in\left(\frac{e^{-n(\lambda(\eta,\m)+\eps_1)}}{C_1},C_1e^{-n(\lambda(\eta,\m)-\eps_1)}\right)\textrm{
		for all }n\geq N_{1}\right\}
	\]
	and $C_1>0$ is the constant guaranteed by Proposition \ref{tree prop}.
	
	For $n\geq N_2$ define 
	\[
	\widetilde{L}_{\m,n,\eps_1}(\omega):=\{\a\in L_{\m,n,\eps_1}:[\a]\cap H(\omega)\neq\emptyset\}.
	\]
	Notice that Property $1.$ is immediately satisfied by $\widetilde{L}_{\m,n,\eps_1}(\omega)$. To see that Property $2.$
	holds notice that $\m(H(\omega)\cap
	\cup_{\a\in L_{\m,n,\eps_1}}[a])>12/16$ follows from from the bounds $\m(\cup_{\a\in L_{\m,n,\eps_1}}[a])>15/16$ and $\m(H(\omega))>13/16$. Our cardinality bound now follows because $\m([\a])\asymp c_{\m}^n$ for each $\a\in \widetilde{L}_{\m,n,\eps_{1}}(\omega)$.
\end{proof}

\subsection{Transversality estimates}
To prove Theorem \ref{Main theorem} we need the following transversality lemma that bounds the probability that two points in the attractor are close. This is the only part in the proof where we use our non-singularity assumptions.

\begin{lemma}
	\label{transversality lemma rework}
	Let $(\{\Omega_i\}_{i\in \A},\{\eta_i\}_{i\in \A},\{t_i\}_{i\in \A})$ be a RIFS and assume one of
	the following:
	\begin{itemize}
		\item[A.] Assume that $\Omega_i\in S_d$ for all $i\in \A$ and that the RIFS is distantly
		non-singular;
		\item[B.] Assume that $\Omega_i\in M_d$ for all $i\in\A$ and the RIFS is 
		non-singular.
	\end{itemize}
	Let $\b\in \A^{\N}$ and $\a,\a'\in \A^{*}$ be two distinct words such that neither one is the
	prefix of the other. Then for any $C>0$ and $s>0$ and all $0<\eps<s$,
	\begin{align}&\int_{\Omega}\chi_{[-r,r]}(|\Pi_{\omega}(\a\b)-\Pi_{\omega}(\a'\b)|)\cdot
	\chi\left(\omega:\Det(\wA_{\omega,a_1\ldots a_n})\in \left(\frac{e^{-n(s+\eps)}}{C},Ce^{-n(s-\eps)}\right) \textrm{ for all
	}1\leq n\leq |\a| \right)\nonumber\\
	&\hspace{10em}\cdot \chi\left(\omega:\Det(\wA_{\omega,a_1'\ldots a_n'})\in
	\left(\frac{e^{-n(s+\eps)}}{C},Ce^{-n(s-\eps)}\right) \textrm{ for all }1\leq n\leq |\a'|
	\right)\,d\eta\nonumber\\
	&=\O(r^d\cdot C\cdot e^{|\a\wedge\a'|(s+\eps)}).\nonumber\\\label{eq:ineqproof}
	\end{align}
\end{lemma}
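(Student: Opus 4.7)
\emph{Setup and decomposition.} I will set $\c := \a \wedge \a'$, $k := |\c|$, $i := a_{k+1}$, and $j := a'_{k+1}$. Since neither word is a prefix of the other, $i \neq j$. Using that each $\phi_{\omega,\a''}$ is affine with linear part $\wA_{\omega,\a''}$, I will factor out the common prefix to write
\[
\Pi_{\omega}(\a\b) - \Pi_{\omega}(\a'\b) \;=\; \wA_{\omega,\c}\!\left( A_{\omega,\c i}\, z \;+\; (t_i - t_j) \;-\; A_{\omega,\c j}\, z' \right),
\]
where $z,z'$ are projections of the respective tails and lie in $\bigcup_{\omega' \in \Omega} \Pi_{\omega'}(\A^{\N})$, so that the (distantly) non-singular hypotheses apply to them.

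\emph{Conditioning and core bound.} Next, I will condition on $\omega$ everywhere except the two matrices $A := A_{\omega,\c i}$ and $B := A_{\omega,\c j}$, which are mutually independent with laws $\eta_i, \eta_j$ and independent of everything else. Under this conditioning $\wA_{\omega,\c}, z, z'$ become frozen, and the determinant indicator at level $n = k$ forces $|\Det(\wA_{\omega,\c})| \geq e^{-k(s+\eps)}/C$; the indicators at other levels contribute at most $1$ and can be dropped in the upper bound. The event $|\wA_{\omega,\c}(y)| \leq r$ with $y := Az + (t_i - t_j) - Bz'$ then places $y$ in a Borel set of Lebesgue volume $\O(C r^d e^{k(s+\eps)})$---the ellipse $\wA_{\omega,\c}^{-1} B(0,r)$ in case B, and the ball $B(0,\, r/|\Det(\wA_{\omega,\c})|^{1/d})$ in case A.

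\emph{Applying (distantly) non-singularity.} In case B, for each fixed $B$ the event becomes $Az \in E - c$ with $c = (t_i - t_j) - Bz'$, a translated ellipse of the same volume, and the non-singular hypothesis applied to $(A, z)$ gives directly that its $\eta_i$-probability is $\O(C r^d e^{k(s+\eps)})$. In case A, I will use a symmetric Fubini, since the distantly non-singular hypothesis only handles target balls whose centre lies outside $B(0, \tfrac{1}{8} \min_{i \neq j} |t_i - t_j|)$. I will split $\Omega_i \times \Omega_j$ into three pieces: on $E_1 := \{|Bz' - (t_i - t_j)| \geq \tfrac{1}{8}\min_{i \neq j}|t_i-t_j|\}$, integrating $A$ first makes distantly non-singular applicable; on $E_2 := E_1^{c} \cap \{|Az + (t_i - t_j)| \geq \tfrac{1}{8}\min_{i \neq j}|t_i-t_j|\}$, integrating $B$ first makes it applicable. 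On the remainder $E_3$, the triangle inequality forces $|y| \geq \tfrac{3}{4} \min_{i \neq j}|t_i - t_j|$, which for small $r$ contradicts $|\wA_{\omega,\c}(y)| \leq r$ (and is absorbed into the implicit constant for larger $r$, where $r^d e^{k(s+\eps)}$ already dominates). Summing the three contributions and then integrating over the remaining $\omega$-coordinates (contributing at most $1$) yields the claimed bound.

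\emph{Main obstacle.} The principal difficulty is case A: distantly non-singular is genuinely weaker than non-singular---it does not control ball probabilities centred near the origin---so a single conditioning step leaves an irreducible constant error. The symmetric Fubini above, which exploits the non-degeneracy $t_i \neq t_j$ to guarantee that at least one of the two conditioning orders has a centre far from $0$, is what removes this error.
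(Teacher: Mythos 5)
Your proposal is correct and follows essentially the same route as the paper's proof: factor out the common prefix $\a\wedge\a'$, use the determinant indicator at level $|\a\wedge\a'|$ to turn $\wA_{\omega,\a\wedge\a'}^{-1}(B(0,r))$ into a ball (case A) or ellipse (case B) of volume $\O(Cr^de^{|\a\wedge\a'|(s+\eps)})$, and then condition on all but one of $A_{\omega,(\a\wedge\a')a_{k+1}}$, $A_{\omega,(\a\wedge\a')a'_{k+1}}$ and apply the (distantly) non-singular hypothesis via Fubini. Your symmetric three-way split on the position of the ball centre is just a repackaging of the paper's four-way split on whether $|Az|$, $|Bz'|$ exceed $r^*/4$ (with the ``both small'' case excluded by the separation $|t_i-t_j|\geq r^*$ once $rC^{1/d}e^{|\a\wedge\a'|(s+\eps)/d}$ is assumed small), so no further comment is needed.
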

\begin{proof}
We split our proof into two parts.\\

	\noindent \textbf{Proof under assumption A.}
	First, assume that $|\a\wedge\a'|\geq 1$, i.e.\ that $\a$ and $\a'$ share a common prefix. Note that
	by assumption we also have $|\a\wedge\a'|<\min\{|\a|,|\a'|\}$. Let $\c$ and $\c'$ be the unique words such
	that $\a\b = (\a\wedge\a')\c\b$ and $\a'\b=(\a\wedge\a')\c'\b$.
	We emphasise that $\c\b$ and $\c'\b$ must have distinct first letter. We highlight the following inequality
	\begin{multline*}
	\chi\left(\omega\;:\;\Det(\wA_{\omega,a_1\ldots a_n})\in \left( \frac{e^{-n(s+\eps)}}{C}, C e^{-n(s-\eps)} \right),\;\forall
	1\leq n \leq |\a|\right)\\
	\cdot \chi\left(\omega\;:\;\Det(\wA_{\omega,a_1'\ldots a_n'})\in \left( \frac{e^{-n(s+\eps)}}{C}, C e^{-n(s-\eps)} \right),\;\forall
	1\leq n \leq |\a'|\right)\\
	\leq \chi\left( \omega\;:\;\Det(\wA_{\omega,\a\wedge\a'})\in\left( \frac{e^{-|\a\wedge\a'|(s+\eps)}}{C},
	Ce^{-|\a\wedge\a'|(s-\eps)} \right)\right)
	\end{multline*}
	This implies
	\begin{align}
	&\chi_{[-r,r]}(|\Pi_{\omega}(\a\b)-\Pi_{\omega}(\a'\b)|)\cdot
	\chi\left(\omega:\Det(\wA_{\omega,a_1\ldots a_n})\in \left(\frac{e^{-n(s+\eps)}}{C},Ce^{-n(s-\eps)}\right),\;\forall1\leq n\leq
	|\a| \right)\nonumber\\
	&\hspace{12em}\cdot \chi\left(\omega:\Det(\wA_{\omega,a_1'\ldots a_n'})\in
	\left(\frac{e^{-n(s+\eps)}}{C},Ce^{-n(s-\eps)}\right),\;\forall1\leq n\leq |\a'| \right)\nonumber\\
	&\leq \chi(\omega\,:\,\Pi_{\omega}(\a\b)-\Pi_{\omega}(\a'\b)\in B(0,r))\cdot
	\chi\left( \omega\;:\;\Det(\wA_{\omega,\a\wedge\a'})\in\left( \frac{e^{-|\a\wedge\a'|(s+\eps)}}{C},
	Ce^{-|\a\wedge\a'|(s-\eps)} \right)\right)\nonumber\\
	&=\chi(\omega\,:\,\Pi_{\omega,\a\wedge\a'}(\c\b)-\Pi_{\omega,\a\wedge\a'}(\c'\b)\in(\wA_{\omega,\a\wedge\a'})^{-1}(
	B(0,r)))\nonumber\\
	&\hspace{15em}\cdot
	\chi\left( \omega\;:\;\Det(\wA_{\omega,\a\wedge\a'})\in\left( \frac{e^{-|\a\wedge\a'|(s+\eps)}}{C},
	Ce^{-|\a\wedge\a'|(s-\eps)} \right)\right)\label{eq:lastline}.
	\end{align}
	Since $A_{\omega,\a\wedge\a'}$ is by assumption $A$ a similarity, its contraction rate is
	$\Det(A_{\omega,\a\wedge\a'})^{1/d}$. Therefore, \eqref{eq:lastline} can be bounded above
	by
	\begin{equation}
	\label{eq:tosplit}
	\chi_{1}:=\chi(\omega\,:\,\Pi_{\omega,\a\wedge\a'}(\c\b)-\Pi_{\omega,\a\wedge\a'}(\c'\b)\in
	B(0,r\cdot C^{1/d}e^{|\a\wedge\a'|(s+\eps)/d})).
	\end{equation}
	We remark that the iterative definition of the random maps give the identity
	\[
	\Pi_{\omega,\a\wedge\a'}(\c\b) = A_{\omega,\a\wedge\a' c_1}(\Pi_{\omega,\a\wedge\a'
		c_1}(\sigma(\c\b)))+t_{c_1}.
	\]
	Write $r^{*} = \min_{i\neq j}|t_i-t_j|$. 
	Since $\c\b$ and $\c'\b$ differ in their first letter we have
	$|t_{c_1}-t_{c'_1}|\geq r^{*}$. Note that
	\begin{align*}
	1&=\chi(\omega\,:\,|A_{\omega,\a\wedge\a' c_1}(\Pi_{\omega,\a\wedge\a'
		c_1}(\sigma(\c\b)))| < r^{*}/4)
	\cdot \chi(\omega\,:\,|A_{\omega,\a\wedge\a' c'_1}(\Pi_{\omega,\a\wedge\a'
		c'_1}(\sigma(\c'\b)))| < r^{*}/4)\\
	&+\chi(\omega\,:\,|A_{\omega,\a\wedge\a' c_1}(\Pi_{\omega,\a\wedge\a'
		c_1}(\sigma(\c\b)))| \geq r^{*}/4)
	\cdot \chi(\omega\,:\,|A_{\omega,\a\wedge\a' c'_1}(\Pi_{\omega,\a\wedge\a'
		c'_1}(\sigma(\c'\b)))| < r^{*}/4)\\
	&+\chi(\omega\,:\,|A_{\omega,\a\wedge\a' c_1}(\Pi_{\omega,\a\wedge\a'
		c_1}(\sigma(\c\b)))| <r^{*}/4)
	\cdot \chi(\omega\,:\,|A_{\omega,\a\wedge\a' c'_1}(\Pi_{\omega,\a\wedge\a'
		c'_1}(\sigma(\c'\b)))| \geq r^{*}/4)\\
	&+\chi(\omega\,:\,|A_{\omega,\a\wedge\a' c_1}(\Pi_{\omega,\a\wedge\a'
		c_1}(\sigma(\c\b)))| \geq r^{*}/4)
	\cdot \chi(\omega\,:\,|A_{\omega,\a\wedge\a' c'_1}(\Pi_{\omega,\a\wedge\a'
		c'_1}(\sigma(\c'\b)))| \geq r^{*}/4)\\
	&=:{\chi}_{<}\cdot{\chi}'_{<}+{\chi}_{\geq}\cdot{\chi}'_{<}+{\chi}_{<}\cdot{\chi}'_{\geq}+{\chi}_{\geq}\cdot{\chi}'_{\geq}.
	\end{align*}
	We use this identity to split write \eqref{eq:tosplit} as four summands 
	\[
	\chi_1 =
	\chi_1\cdot{\chi}_{<}\cdot{\chi}'_{<}+
	\chi_1\cdot{\chi}_{\geq}\cdot{\chi}'_{<}+
	\chi_1\cdot{\chi}_{<}\cdot{\chi}'_{\geq}+
	\chi_1\cdot{\chi}_{\geq}\cdot{\chi}'_{\geq}.\]
	The first of these summands is
	\begin{multline*}
	\chi_1 \cdot{\chi}_{<}\cdot{\chi}'_{<}=\chi(\omega\,:\,\Pi_{\omega,\a\wedge\a'}(\c\b)-\Pi_{\omega,\a\wedge\a'}(\c'\b)\in
	B(0,r\cdot C^{1/d}e^{|\a\wedge\a'|(s+\eps)/d}))\\
	\cdot  \chi(\omega\,:\,|A_{\omega,\a\wedge\a' c_1}(\Pi_{\omega,\a\wedge\a'
		c_1}(\sigma(\c\b)))| < r^{*}/4)\\
	\cdot \chi(\omega\,:\,|A_{\omega,\a\wedge\a' c_1'}(\Pi_{\omega,\a\wedge\a'
		c'_1}(\sigma(\c'\b)))| < r^{*}/4)
	\end{multline*}
	Since we are interested in asymptotic behaviour with respect to $Cr^de^{|\a\wedge\a'|(s+\eps)}\to0$, we can without loss of
	generality, assume that $r\cdot C^{1/d}e^{|\a\wedge\a'|(s+\eps)/d} < r^{*}/8$.
	In which case we have $\chi_1 \cdot{\chi}_{<}\cdot{\chi}'_{<}=0.$ This is because, if $\chi_1 \cdot{\chi}_{<}\cdot{\chi}'_{<}=1$ then we would have  
	\begin{align*}r^{*}/2<&|t_{c_1}-t_{c_1'}|-|A_{\omega,\a\wedge\a' c_1}(\Pi_{\omega,\a\wedge\a'
		c_1}(\sigma(\c\b)))-A_{\omega,\a\wedge\a' c'_1}(\Pi_{\omega,\a\wedge\a'
		c'_1}(\sigma(\c'\b)))|\\
	\leq&|A_{\omega,\a\wedge\a' c_1}(\Pi_{\omega,\a\wedge\a'
		c_1}(\sigma(\c\b)))+t_{c_1}-
	A_{\omega,\a\wedge\a' c'_1}(\Pi_{\omega,\a\wedge\a'
		c'_1}(\sigma(\c'\b)))-t_{c_1'}|\\
	=&|\Pi_{\omega,\a\wedge\a'}(\c\b) - \Pi_{\omega,\a\wedge\a'}(\c'\b)|<r^{*}/8,
	\end{align*}
	which is not possible.

Summarising the above, we have shown that the left hand side of inequality \eqref{eq:ineqproof} satisfies
	\begin{align*}
	&\int_{\Omega}\chi_{[-r,r]}(|\Pi_{\omega}(\a\b)-\Pi_{\omega}(\a'\b)|)\cdot
	\chi\left(\omega:\Det(\wA_{\omega,a_1,\ldots,a_n})\in \left(\frac{e^{-n(s+\eps)}}{C},Ce^{-n(s-\eps)}\right),\,\forall1\leq n\leq |\a| \right)\\
	&\hspace{10em}\cdot \chi\left(\omega:\Det(\wA_{\omega,a_1',\ldots,a_n'})\in
	\left(\frac{e^{-n(s+\eps)}}{C},Ce^{-n(s-\eps)} \right),\, \forall1\leq n\leq |\a'|
	\right)\,d\eta\\
	&\leq \int_{\Omega}(\chi_1\cdot{\chi}_{\geq}\cdot{\chi}'_{<}+
	\chi_1\cdot{\chi}_{<}\cdot{\chi}'_{\geq}+
	\chi_1\cdot{\chi}_{\geq}\cdot{\chi}'_{\geq})d\eta + \O(r^d\cdot C\cdot e^{|\a\wedge\a'|(s+\eps)}).
	\end{align*}It remains to appropriately bound the above integral. Manipulating this integral we have 
	\begin{align*}
	&\int_{\Omega}(\chi_1\cdot{\chi}_{\geq}\cdot{\chi}'_{<}+
	\chi_1\cdot{\chi}_{<}\cdot{\chi}'_{\geq}+
	\chi_1\cdot{\chi}_{\geq}\cdot{\chi}'_{\geq})d\eta.\\
	&=\int_{\Omega}(2\chi_1\cdot{\chi}_{\geq}\cdot{\chi}'_{<}+
	\chi_1\cdot{\chi}_{\geq}\cdot{\chi}'_{\geq})d\eta\hspace{15em}\text{(By symmetry)}
	\\
	&\leq 2\int_{\Omega}(\chi_1\cdot{\chi}_{\geq})d\eta\\
	&= 2\int_{\prod_{\d\in\A^*}\Omega_{l(\d)}}(\chi_1\cdot{\chi}_{\geq})\,d\prod_{\d\in\A^*}\eta_{\d}.
	\\
	&=2\int_{\prod_{\d\in\A^*}\Omega_{l(\d)}}
	\chi(\omega\,:\,\Pi_{\omega,\a\wedge\a'}(\c\b)-\Pi_{\omega,\a\wedge\a'}(\c'\b)\in
	B(0,r\cdot C^{1/d}e^{|\a\wedge\a'|(s+\eps)/d}))
	\\
	&\hspace{15em}
	\cdot\chi(\omega\,:\,|A_{\omega,\a\wedge\a' c_1}(\Pi_{\omega,\a\wedge\a'
		c_1}(\sigma(\c\b)))| \geq r^{*}/4)
	\,d\prod_{\d\in\A^*}\eta_{\d}.
	\\
	&=2\int_{\prod_{\d\in\A^*}\Omega_{l(\d)}}
	\chi(\omega\,:\,A_{\omega,\a\wedge\a' c_1}(\Pi_{\omega,\a\wedge\a'
		c_1}(\sigma(\c'\b)))\in
	B(\Pi_{\omega,\a\wedge\a'}(\c'\b)-t_{c_1},r\cdot C^{1/d}e^{|\a\wedge\a'|(s+\eps)/d}))\\&\hspace{15em}
	\cdot\chi(\omega\,:\,|A_{\omega,\a\wedge\a' c_1}(\Pi_{\omega,\a\wedge\a'
		c_1}(\sigma(\c\b)))| \geq r^{*}/4)
	\,d\prod_{\d\in\A^*}\eta_{\d}.
	\end{align*}
	As stated above, there is no loss of generality in assuming that $r\cdot C^{1/d}e^{|\a\wedge\a'|(s+\eps)/d}<r^{*}/8.$ Therefore, if $|A_{\omega,\a\wedge\a' c_1}(\Pi_{\omega,\a\wedge\a'
		c_1}(\sigma(\c\b)))| \geq r^{*}/4$ and $A_{\omega,\a\wedge\a' c_1}(\Pi_{\omega,\a\wedge\a'
		c_1}(\sigma(\c'\b)))\in
	B(\Pi_{\omega,\a\wedge\a'}(\c'\b)-t_{c_1},r\cdot C^{1/d}e^{|\a\wedge\a'|(s+\eps)/d}))$ then we must have $|\Pi_{\omega,\a\wedge\a'}(\c'\b)-t_{c_1}|\geq r^{*}/8.$ Using this fact and then Fubini's theorem, we may bound the integral above by
	\begin{align*}
	&2\int_{\prod_{\d\in\A^*\setminus\{\a\wedge\a'c_1\}}\Omega_{l(\d)}}\int_{\Omega_{c_1}}\chi(\omega\,:\,A_{\omega,\a\wedge\a' c_1}(\Pi_{\omega,\a\wedge\a'
		c_1}(\sigma(\c'\b)))\in
	B(\Pi_{\omega,\a\wedge\a'}(\c'\b)-t_{c_1},r\cdot C^{1/d}e^{|\a\wedge\a'|(s+\eps)/d}))\\&\hspace{15em}
	\cdot\chi(\omega\,:\,|\Pi_{\omega,\a\wedge\a'}(\c'\b)-t_{c_1}|\geq r^{*}/8)\hspace{1em}\,d\eta_{\a\wedge\a'c_1}\,d\prod_{\d\in\A^*}\eta_{\d}.
	\end{align*} 
	Notice that the inner integrand is the probability that the image of a certain point under the linear part of the map associated with
	$\a\wedge\a' c_1$ lies in a certain ball away from the origin.  By our distantly non-singular assumption, this probability is bounded above by a constant times the volume of the
	ball. Therefore we obtain the upper bound 
	\begin{align*}
	&2\int_{\prod_{\d\in\A^*\setminus\{\a\wedge\a'c_1\}}\Omega_{l(\d)}}\int_{\Omega_{c_1}}\chi(\omega\,:\,A_{\omega,\a\wedge\a' c_1}(\Pi_{\omega,\a\wedge\a'
		c_1}(\sigma(\c'\b)))\in
	B(\Pi_{\omega,\a\wedge\a'}(\c'\b)-t_{c_{1}},r\cdot C^{1/d}e^{|\a\wedge\a'|(s+\eps)/d}))\\&\hspace{15em}
	\cdot\chi(\omega\,:\,|\Pi_{\omega,\a\wedge\a'}(\c'\b)-t_{c_{1}}|\geq r^{*}/8)\hspace{1em}\,d\eta_{\a\wedge\a'c_1}\,d\prod_{\stackrel{\d\in\A^*}{\d\neq \a\wedge\a'c_1}}\eta_{\d}\\
	&\leq
	2\int_{\prod_{\d\in\A^*\setminus\{\a\wedge\a'c_1\}}\Omega_{l(\d)}}
	C'\cdot r^d\cdot Ce^{|\a\wedge\a'|(s+\eps)}
	\,d\prod_{\d\in\A^*}\eta_{\d}
	\\
	&\leq 2C'C r^d e^{|\a\wedge\a'|(s+\eps)},
	\end{align*}
	where $C'>0$ is the constant given by the distantly non-singular condition.
	This shows the correct upper bound when $|\a\wedge\a'|\geq 1$.
	The proof where $|\a\wedge\a'|=0$ follows along similar lines, noting that this implies the first
	letters of $\a$ and $\a'$ differ and we can directly apply the distantly non-singular condition. \\

	\noindent
	\textbf{Proof under assumption B.}
	
	The proof under assumption B is similar to the proof under assumption A. However, since we are no longer
	dealing with similarities, the set $(\wA_{\omega,\a\wedge\a'})^{-1}(B(0,r))$ in \eqref{eq:lastline} is not a ball
	but rather an ellipse. 
	Writing $E_{\omega,\a\wedge\a'}=(\wA_{\omega,\a\wedge\a'})^{-1}(B(0,r))$ we obtain 
	\begin{align*}
	&\int_{\Omega}\chi(\omega\,:\,\Pi_{\omega,\a\wedge\a'}(\c\b)-\Pi_{\omega,\a\wedge\a'}(\c'\b)\in
	E_{\omega,\a\wedge\a'})\\
	&\hspace{10em}\cdot\chi\left( \omega\;:\;\Det(\wA_{\omega,\a\wedge\a'})\in\left( \frac{e^{-|\a\wedge\a'|(s+\eps)}}{C},
	Ce^{-|\a\wedge\a'|(s-\eps)} \right)\right)d\eta\\
	&=\int_{\Omega}\chi(\omega\,:\,A_{\omega,\a\wedge\a' c_1}(\Pi_{\omega,\a\wedge\a'
		c_1}(\sigma(\c\b)))\in\Pi_{\omega,\a\wedge\a'}(\c'\b)-t_{c_1}+
	E_{\omega,\a\wedge\a'})\\
	&\hspace{10em}\cdot\chi\left( \omega\;:\;\Det(\wA_{\omega,\a\wedge\a'})\in\left( \frac{e^{-|\a\wedge\a'|(s+\eps)}}{C},
	Ce^{-|\a\wedge\a'|(s-\eps)} \right)\right)d\eta
	\end{align*}
	as an upper bound for the left hand side of \eqref{eq:ineqproof}. The proof then follows by
	an analogous argument where we appeal to Fubini's theorem and the conditions imposed by the
	second characteristic function. It is a consequence of our stronger assumption that the RIFS
	in non-singular that we do not need to include the initial conditioning argument that was
	necessary under assumption $A$.
\end{proof}

\subsection{General results}
To prove Theorem \ref{Main theorem} we will use the following results from \cite{BakOver} and \cite{BerVel2}.

Given $r>0$, we say that a set $Y\subset \mathbb{R}^d$ is an $r$-separated set if $|z-z'|>r$ for all
distinct $z,z'\in Y$. Given a finite $Y\subset\mathbb{R}^d$ and $r>0$ we let $$T(Y,r):=\sup\{\# Y':Y'\subset Y \textrm{ and }Y' \textrm{ is an }r\textrm{-separated set}\}.$$ Now suppose that we have a metric space $\Omega$ and $\widetilde{X}$ is some compact subset of $\mathbb{R}^d$. Suppose that for each $n\in \mathbb{N}$ there exists a finite set of functions $\{f_{l,n}:\Omega\to \widetilde{X}\}_{l=1}^{R_n}.$ For each $\omega\in \Omega$ we let $$Y_{n}(\omega):=\{f_{l,n}(\omega)\}_{l=1}^{R_n}.$$ Moreover, given $c>0,s>0,$ and $n\in\mathbb{N},$ we let $$B(c,s,n):=\left\{\omega\in \Omega: \frac{T(Y_{n}(\omega),\frac{s}{R_{n}^{1/d}})}{R_n}>c\right\}.$$ The following proposition was proved in \cite{BakOver}.

\begin{proposition}
	\label{general prop}
	Let $\omega\in \Omega$ and $g:\mathbb{N}\to [0,\infty)$. Assume that the following properties are satisfied:
	\begin{itemize}
		\item There exists $\gamma>1$ such that 
		\[
		R_n\asymp \gamma^n.
		\]
		\item There exists $c>0$ and $s>0$ such that 
		\[
		\sum_{\substack{n\in\N\\\omega\in B(c,s,n)}}g(n)=\infty.
		\]
	\end{itemize}
	Then $$\left\{x\in \mathbb{R}^d:|x-f_{l,n}(\omega)|\leq \left(\frac{g(n)}{R_n}\right)^{1/d}
	\textrm{ for i.m.\ }(l,n)\in \{1,\ldots, R_n\}\times\mathbb{N}\right\}$$
	has positive Lebesgue measure.
\end{proposition}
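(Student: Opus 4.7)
The approach is to apply a Chung--Erd\H{o}s (second moment) bound to the deterministic sequence of limsup events generated by the hypothesised separated subsystems; since $\omega$ is fixed throughout, nothing is random in the argument. Write $D:=\set{n\in\N\,:\,\omega\in B(c,s,n)}$ for the divergence set and, for each $n\in D$, fix a subset $Y_n'\subseteq Y_n(\omega)$ which is $s/R_n^{1/d}$-separated and has $\#Y_n'\geq cR_n$. Set $r_n:=(g(n)/R_n)^{1/d}$ and
\begin{equation*}
E_n:=\bigcup_{y\in Y_n'}B(y,r_n).
\end{equation*}
Since $Y_n(\omega)\subseteq\widetilde{X}$ and $\widetilde{X}$ is compact, all the $E_n$ lie in a common bounded ball, and $\limsup_{n\in D}E_n$ is contained in the set whose positive Lebesgue measure is asserted by the proposition.

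Second, I would prove the lower bound $\L(E_n)\gtrsim g(n)$ for $n\in D$. If $r_n\leq s/(2R_n^{1/d})$, the balls making up $E_n$ are pairwise disjoint by the separation property of $Y_n'$, and so $\L(E_n)\gtrsim cR_n\cdot r_n^d=c\,g(n)$. If on the other hand $r_n>s/(2R_n^{1/d})$, I would replace each ball by the concentric ball of radius $s/(2R_n^{1/d})$, which are still disjoint, and conclude $\L(E_n)\gtrsim c\,s^d$; since truncating $g$ at any fixed level does not affect divergence, we may assume $g$ is bounded and so $\L(E_n)\gtrsim g(n)$ in both regimes. In particular, $\sum_{n\in D}\L(E_n)=\infty$.

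Third, the heart of the argument is the cross-level intersection bound. For $m<n$ in $D$ we have $r_n\leq r_m$ (using boundedness of $g$ and $R_m\leq R_n$), so the contribution of a ball $B(y,r_m)$ with $y\in Y_m'$ to $\L(E_m\cap E_n)$ is bounded by a multiple of $r_n^d$ times the number of points of $Y_n'$ within distance $r_m+r_n$ of $y$. The latter is controlled, via the separation of $Y_n'$ and a packing estimate, by $\lesssim 1+(r_m+r_n)^dR_n/s^d$. Combining with $\#Y_m'\leq R_m$, and using $R_k\asymp\gamma^k$ together with $R_kr_k^d=g(k)$, I would deduce
\begin{equation*}
\L(E_m\cap E_n)\;\lesssim\;R_mr_n^d+R_m(r_m+r_n)^dR_nr_n^d/s^d\;\lesssim\;\gamma^{m-n}g(n)+\frac{g(m)g(n)}{s^d}.
\end{equation*}
Summing over pairs $m,n\leq N$ in $D$ and adding the diagonal contribution $\sum\L(E_n)\lesssim\sum g(n)$, the denominator of the Chung--Erd\H{o}s inequality
\begin{equation*}
\L\Bigl(\limsup_{n\in D}E_n\Bigr)\;\geq\;\limsup_{N\to\infty}\frac{\bigl(\sum_{n\leq N,\,n\in D}\L(E_n)\bigr)^{2}}{\sum_{m,n\leq N,\,m,n\in D}\L(E_m\cap E_n)}
\end{equation*}
is bounded above by $\O\bigl(S_N+s^{-d}S_N^2\bigr)$, where $S_N:=\sum_{n\leq N,\,n\in D}g(n)\to\infty$, while the numerator is $\gtrsim S_N^2$. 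The ratio then converges to a positive constant comparable to $s^d$, yielding $\L(\limsup E_n)>0$ as desired.

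The main obstacle is the cross-level bound above. The assumption $R_n\asymp\gamma^n$ with $\gamma>1$ is used precisely to ensure that the off-diagonal term $R_mr_n^d$ is summable in $n-m$, which is what keeps the second-moment ratio bounded as $N\to\infty$. Without exponential growth of $R_n$ this quasi-independence estimate would fail, and a purely first-moment (Borel--Cantelli) approach would yield no conclusion whatsoever.
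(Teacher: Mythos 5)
Your argument is correct. A point of comparison first: the paper does not prove this proposition at all — it is imported verbatim from \cite{BakOver} — so there is no in-paper proof to measure you against; the proof in that memoir is of the same second-moment type, and your write-up is a faithful self-contained reconstruction: extract, for $n$ in the divergence set $D$, an $s/R_n^{1/d}$-separated subset $Y_n'\subseteq Y_n(\omega)$ with $\#Y_n'\geq cR_n$, show $\L(E_n)\gtrsim g(n)$ (after the harmless truncation making $g$ bounded, which preserves divergence and shrinks the target set), bound $\L(E_m\cap E_n)$ by covering each $r_m$-ball by the $r_n$-balls around nearby points of $Y_n'$ together with the packing estimate, and feed the resulting bound $\gamma^{m-n}g(n)+s^{-d}g(m)g(n)$ into the Kochen--Stone/Chung--Erd\H{o}s inequality for Lebesgue measure normalised on a fixed ball containing a neighbourhood of the compact set $\widetilde{X}$; the ratio then stays bounded below because the off-diagonal geometric term contributes only $\O(S_N)$ against the numerator $\gtrsim S_N^2$. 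One small blemish: your parenthetical claim that $r_n\leq r_m$ for $m<n$ is unjustified and can fail, since $g$ is not assumed monotone and $r_k^d=g(k)/R_k$, so a very small $g(m)$ makes $r_m<r_n$ possible. Fortunately it is never used: the inclusion $B(y,r_m)\cap E_n\subseteq\bigcup\{B(z,r_n):z\in Y_n',\ |z-y|\leq r_m+r_n\}$ and the packing count $\lesssim 1+(r_m+r_n)^dR_n/s^d$ hold regardless of which radius is larger, and the extra term $\gamma^{m-n}g(n)^2/s^d$ that then appears is still $\lesssim s^{-d}\gamma^{m-n}g(n)$ by boundedness of $g$, so your denominator bound $\O(S_N+s^{-d}S_N^2)$ and the conclusion are unaffected; I would simply delete that sentence.
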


We will also use the following lemma which follows from Lemma 1 of \cite{BerVel2}.
\begin{lemma}
	\label{comparable balls}
	Let $(x_j)$ be a sequence of points in $\mathbb{R}^d$ and $(r_j),(r_j')$ be two sequences of
	positive real numbers both converging to zero. If $r_j\asymp r_j'$ then $$\L(x:x\in
	B(x_j,r_j) \textrm{ for i.m.\ } j)=\L(x:x\in B(x_j,r_j') \textrm{ for i.m.\ } j).$$
\end{lemma}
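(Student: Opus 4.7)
The plan is to reduce the equality to a direct application of Lemma 1 of \cite{BerVel2}, which provides exactly the comparability mechanism we need. By the assumption $r_j\asymp r_j'$, there is a constant $C>1$ with $C^{-1}r_j\leq r_j'\leq Cr_j$ for all $j$, and both sequences tend to zero. I would first introduce the auxiliary sequence $r_j^{*}:=\min\{r_j,r_j'\}$, which also tends to zero and satisfies $r_j^{*}\asymp r_j\asymp r_j'$ with the same implicit constants. Then $B(x_j,r_j^{*})\subseteq B(x_j,r_j)$ and $B(x_j,r_j^{*})\subseteq B(x_j,r_j')$, and the Lebesgue measures of all three balls agree up to a multiplicative factor depending only on $C$ and $d$.

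With this setup, I would apply Lemma 1 of \cite{BerVel2} twice: once to the pair of sequences $\bigl(B(x_j,r_j^{*})\bigr)_j$ and $\bigl(B(x_j,r_j)\bigr)_j$, and once to $\bigl(B(x_j,r_j^{*})\bigr)_j$ and $\bigl(B(x_j,r_j')\bigr)_j$. In each case the inclusions hold and the ratio of measures is uniformly bounded away from zero, so that lemma yields
\[
\L\bigl(x: x\in B(x_j,r_j^{*}) \text{ for i.m.\ } j\bigr)
= \L\bigl(x: x\in B(x_j,r_j) \text{ for i.m.\ } j\bigr),
\]
and analogously with $r_j$ replaced by $r_j'$. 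Chaining these two equalities through the common intermediate limsup set gives the claimed identity.

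There is essentially no obstacle here: the substantive work is packaged inside Lemma 1 of \cite{BerVel2}, whose proof proceeds via the Lebesgue density theorem (every density point of the larger limsup set must also be hit infinitely often by the smaller comparable sets, up to a null set). The only things to check are the trivial facts that $r_j^{*}\to 0$ and that the comparability constant $C$ is preserved under the passage to the minimum, both of which are immediate from the hypothesis $r_j\asymp r_j'$.
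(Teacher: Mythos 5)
Your proposal is correct and takes essentially the same approach as the paper, which gives no argument at all beyond noting that the lemma ``follows from Lemma 1 of \cite{BerVel2}''. Your introduction of $r_j^{*}=\min\{r_j,r_j'\}$ is precisely the small bookkeeping step needed to apply that lemma (which requires the sets to be contained in the balls and of comparable measure) twice and chain the two equalities, so you have simply fleshed out the citation the authors left implicit.
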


\section{Proof of Theorem \ref{Main theorem}}
\label{proof section}
In this section we will prove Theorem \ref{Main theorem}. We begin by remarking that it is a
consequence of Lemma \ref{comparable balls} that Statement 2.\ follows from Statement 1. To prove
Theorem \ref{Main theorem} it therefore suffices to prove Statement 1.

For the rest of this section we fix a RIFS satisfying either assumption A or assumption B, we fix $\m$ a slowly decaying $\sigma$-invariant ergodic probability measure such that $\frac{h(\m)}{\lambda(\eta,\m)}>1$, and $\b\in \A^{\N}$ is fixed. We also let $\eps_1>0$ be sufficiently small such that 
\begin{equation}
\label{eps1def}
h(\m)-\lambda(\m,\eta)-2\eps_1>0.
\end{equation} 
Such an $\eps_1>0$ must exist because of our assumption that $\frac{h(\m)}{\lambda(\m,\eta)}>1.$

Let us fix a parameter $0<\eps_0 <1/2$. By Proposition \ref{important properties prop} we may fix a large $N_{1}'=N_{1}'(\eta)\in \N$ and $N_{2}'=N_{2}'(\eta)\in \N$ such that for a set of $\omega$ with $\eta$-measure at
least $1-\eps_0,$ there exists $\widetilde{L}_{\m,n,\eps_{1}}(\omega)\subseteq L_{\m,n,\eps_{1}}$  for each $n\geq
N_{2}'$ that satisfies the following:
\begin{enumerate}	
	\item For each $\a\in \widetilde L_{\m,n,\eps_{1}}(\omega)$ we have $$|\Det(\wA_{\omega,a_1,\ldots,a_n})|\in \left(\frac{e^{-n(\lambda(\eta,\m)+\eps_1)}}{C_1},C_1e^{-n(\lambda(\eta,\m)-\eps_1)} \right)$$ for all $ N_1'\leq n \leq |\a|$.
	\item $\# \widetilde{L}_{\m,n,\eps_{1}}(\omega)\asymp c_{\m}^{-n}.$ 
\end{enumerate}We denote the set of $\omega$ for which these properties hold by $\Omega'$. By construction $\eta(\Omega')\geq 1-\eps_0$. We will show that the conclusion of Statement $1.$ from Theorem \ref{Main theorem} is satisfied by almost every element of $\Omega'$. Since $\eps_0$ is arbitrary this will complete our proof. 

Note that since $N_{1}'$ only depends upon $\eta$ we can in fact strengthen Property $1.$ on the set $\Omega'$. By letting $C_{1}$ depend upon $\Omega'$, we can replace Property $1$ with the following stronger statement:
\begin{enumerate}
	\item[3.] For each $\a\in \widetilde L_{\m,n,\eps_{1}}(\omega)$ we have $$|\Det(\wA_{\omega,a_1,\ldots,a_n})|\in \left(\frac{e^{-n(\lambda(\eta,\m)+\eps_1)}}{C_1},C_1e^{-n(\lambda(\eta,\m)-\eps_1)} \right)$$ for all $ 1\leq n \leq |\a|$.
\end{enumerate}

The following proposition tells us that for a typical $\omega\in \Omega'$ there are not too many $(\a,\a')\in\widetilde L_{\m,n,\eps_{1}}(\omega)\times \widetilde L_{\m,n,\eps_{1}}(\omega)$ for which $\Pi_{\omega}(\a\b)$ and $\Pi_{\omega}(\a'\b)$ are close. The proof is based upon arguments given in \cite{BakOver}, which in turn are an appropriate adaptation of arguments due to \cite{BenSol} and \cite{PerSol}.

\begin{proposition}
	\label{Pairs prop}
	Let $\Omega'$ be as above. For any $s>0$ and $n\geq N_{2}'$ we have 
	\[
	\int_{\Omega'}\frac{\#\{(\a,\a')\in \widetilde{L}_{\m,n,\eps_{1}}(\omega)\times \widetilde{L}_{\m,n,\eps_{1}}(\omega) :
		|\Pi_{\omega}(\a\b)-\Pi_{\omega}(\a'\b)|\leq \frac{s}{\#L_{\m,n}^{1/d}},\, \a\neq
		\a'\}}{\# L_{\m,n}}d\eta=\O(s^d).
	\]
\end{proposition}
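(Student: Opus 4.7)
The plan is to trade the complicated integral over $\omega$ for an elementary sum over pairs by exploiting the fact that the $\omega$-dependent set $\widetilde L_{\m,n,\eps_1}(\omega)$ sits inside the deterministic set $L_{\m,n,\eps_1}$. Specifically, I would first use the inclusion $\widetilde L_{\m,n,\eps_1}(\omega)\subseteq L_{\m,n,\eps_1}$ together with the $\chi_{\Omega'}\leq 1$ bound to majorise the integrand by
\[
\sum_{\substack{\a,\a'\in L_{\m,n,\eps_1}\\ \a\neq\a'}}
\chi_{\a,\a'\in\widetilde L_{\m,n,\eps_1}(\omega)}\cdot
\chi_{[-r,r]}(|\Pi_{\omega}(\a\b)-\Pi_{\omega}(\a'\b)|),
\]
where $r=s/(\#L_{\m,n})^{1/d}$. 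By Property $3$ of $\widetilde L_{\m,n,\eps_1}(\omega)$, the indicator $\chi_{\a\in\widetilde L_{\m,n,\eps_1}(\omega)}$ is dominated by the determinant condition required by Lemma~\ref{transversality lemma rework} (with parameters $s=\lambda(\eta,\m)$, $\eps=\eps_1$, $C=C_1$). Swapping sum and integral by Fubini and applying the lemma to each pair yields
\[
\int_{\Omega'}\#\{\text{close pairs}\}\,d\eta
\ll s^d\cdot (\#L_{\m,n})^{-1}\sum_{\substack{\a,\a'\in L_{\m,n,\eps_1}\\ \a\neq\a'}} e^{|\a\wedge\a'|(\lambda(\eta,\m)+\eps_1)}.
\]

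The next step is to count pairs grouped by their common prefix length. Fix $k\geq 0$ and a prefix $\b'\in\A^k$ that extends to some element of $L_{\m,n,\eps_1}$. The entropy bound \eqref{uniform entropy condition} gives $\m([\b'])\leq C_2 e^{-k(h(\m)-\eps_1)}$, while every $\a\in L_{\m,n}$ has $\m([\a])\asymp c_{\m}^n$ by the definition of $L_{\m,n}$ and the slowly decaying property. Since the cylinders of extensions are disjoint and contained in $[\b']$, there are at most $\O(e^{-k(h(\m)-\eps_1)}c_{\m}^{-n})$ extensions of $\b'$ in $L_{\m,n,\eps_1}$. Combining this maximum bound with $\#L_{\m,n,\eps_1}\leq\#L_{\m,n}\asymp c_{\m}^{-n}$ gives
\[
\#\{(\a,\a')\in L_{\m,n,\eps_1}^2\,:\,|\a\wedge\a'|\geq k\}\;\ll\; e^{-k(h(\m)-\eps_1)}\,c_{\m}^{-2n}.
\]

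Finally, a standard summation-by-parts reorganises the sum over pairs as
\[
\sum_{\a\neq\a'} e^{|\a\wedge\a'|(\lambda(\eta,\m)+\eps_1)}
\;\ll\; c_{\m}^{-2n}\sum_{k\geq 0} e^{-k(h(\m)-\lambda(\eta,\m)-2\eps_1)},
\]
and the geometric series converges by the choice of $\eps_1$ enforced by \eqref{eps1def}. Feeding this into the earlier bound and using $\#L_{\m,n}\asymp c_{\m}^{-n}$ gives a total of $\O(s^d \cdot c_{\m}^{-n})=\O(s^d\cdot\#L_{\m,n})$, and dividing through by $\#L_{\m,n}$ produces the claimed $\O(s^d)$.

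The main obstacle is the cancellation of the $c_{\m}^{-n}$ factors: the transversality bound per pair is of size $r^d\cdot e^{k(\lambda+\eps_1)}$, which contains an unfavourable factor $e^{k(\lambda+\eps_1)}$ growing with the common prefix, while the entropy-based counting provides a compensating decay $e^{-k(h-\eps_1)}$. The inequality $h(\m)-\lambda(\eta,\m)-2\eps_1>0$, which is exactly what \eqref{eps1def} secures, is precisely the statement that these two exponents balance in our favour. Everything else in the argument is bookkeeping.
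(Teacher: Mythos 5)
Your proposal is correct and follows essentially the same route as the paper: pass from the random index set to the deterministic $L_{\m,n,\eps_1}$ via the determinant indicators of Property 3, apply Lemma \ref{transversality lemma rework} pairwise, group pairs by common prefix length $k=|\a\wedge\a'|$, and use the entropy decay in \eqref{uniform entropy condition} together with \eqref{eps1def} to sum the resulting geometric series. The only difference is bookkeeping: the paper weights each pair by $\m([\a])\m([\a'])$ and uses $\sum_{\a':|\a\wedge\a'|=k}\m([\a'])\leq \m([a_1\ldots a_k])$, whereas you count extensions of a fixed prefix directly via $\m([\a'])\asymp c_{\m}^{n}$ --- the same estimate in a different guise.
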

\begin{proof}
	We begin by observing that 	
	\begin{align*}
	&\#\left\{(\a,\a')\in \widetilde{L}_{\m,n,\eps_{1}}(\omega)\times \widetilde{L}_{\m,n,\eps_{1}}(\omega) : |\Pi_{\omega}(\a\b)-\Pi_{\omega}(\a'\b)|\leq \frac{s}{\#L_{\m,n}^{1/d}},\, \a\neq \a'\right\}\\
	=&\sum_{\stackrel{(\a,\a')\in \widetilde{L}_{\m,n,\eps_{1}}(\omega)\times \widetilde{L}_{\m,n,\eps_{1}}(\omega) }{\a\neq \a'}}\chi_{[\frac{-s}{\#L_{\m,n}^{1/d}},\frac{s}{\#L_{\m,n}^{1/d}}]}(|\Pi_{\omega}(\a\b)-\Pi_{\omega}(\a'\b)|).
	\end{align*}By Property $3.$ above we know that for any $\omega\in\Omega'$ each $\a\in \widetilde{L}_{\m,n,\eps_{1}}(\omega)$ satisfies 
	\begin{equation}
	\label{good determinants}
	|\Det(\wA_{\omega,a_1,\ldots,a_n})|\in \left(\frac{e^{-n(\lambda(\eta,\m)+\eps_1)}}{C_1},C_1e^{-n(\lambda(\eta,\m)-\eps_1)} \right)
	\end{equation} for all $ 1\leq n \leq |\a|$. Therefore we have the following upper bound for our counting function
	\begin{align*}
	&\#\left\{(\a,\a')\in \widetilde{L}_{\m,n,\eps_{1}}(\omega)\times \widetilde{L}_{\m,n,\eps_{1}}(\omega) : |\Pi_{\omega}(\a\b)-\Pi_{\omega}(\a'\b)|\leq \frac{s}{\#L_{\m,n}^{1/d}},\, \a\neq \a'\right\}\\
	\leq & \sum_{\stackrel{(\a,\a')\in L_{\m,n,\eps_{1}}\times L_{\m,n,\eps_{1}} }{\a\neq \a'}}\chi_{[\frac{-s}{\#L_{\m,n}^{1/d}},\frac{s}{\#L_{\m,n}^{1/d}}]}(|\Pi_{\omega}(\a\b)-\Pi_{\omega}(\a'\b)|)\cdot \chi(\omega:\a \textrm{ satisfies }\eqref{good determinants})\\
	& \cdot \chi(\omega:\a' \textrm{ satisfies }\eqref{good determinants})
	\end{align*}
	Recall that $L_{\m,n,\eps_{1}}$ was defined in \eqref{uniform entropy condition}. Notice that we are now summing over all pairs in $L_{\m,n,\eps_{1}}\times L_{\m,n,\eps_{1}}$ such that
	$\a\neq \a'$. In particular the terms in this sum no longer depend upon $\omega$. Since $\m([\a])\asymp \# L_{\m,n}^{-1}$ for each $\a\in L_{\m,n}$ we have 
	\begin{align*}
	&\int_{\Omega'}\frac{\#\{(\a,\a')\in \widetilde{L}_{\m,n,\eps_{1}}(\omega)\times \widetilde{L}_{\m,n,\eps_{1}}(\omega) : |\Pi_{\omega}(\a\b)-\Pi_{\omega}(\a'\b)|\leq \frac{s}{\#L_{\m,n}^{1/d}},\, \a\neq \a'\}}{\# L_{\m,n}}d\eta
	\\
	\ll & \#L_{\m,n}\sum_{\stackrel{(\a,\a')\in L_{\m,n,\eps_{1}}\times L_{\m,n,\eps_{1}} }{\a\neq \a'}}\m([\a])\m([\a'])\int_{\Omega'} \chi_{[\frac{-s}{\#L_{\m,n}^{1/d}},\frac{s}{\#L_{\m,n}^{1/d}}]}(|\Pi_{\omega}(\a\b)-\Pi_{\omega}(\a'\b)|)
	\\
	&\hspace{17em}\cdot \chi(\omega:\a \textrm{ satisfies }\eqref{good determinants})\cdot
	\chi(\omega:\a' \textrm{ satisfies }\eqref{good determinants})\;d\eta
	\\
	\ll  & \#L_{\m,n}\sum_{\a\in L_{\m,n,\eps_{1}}}\sum_{k=0}^{|\a|}\sum_{\stackrel{\a'\in
			L_{\m,n,\eps_{1}}}{|\a\wedge\a'|=k}}\m([\a])\m([\a'])\int_{\Omega}
	\chi_{[\frac{-s}{\#L_{\m,n}^{1/d}},\frac{s}{\#L_{\m,n}^{1/d}}]}(|\Pi_{\omega}(\a\b)-\Pi_{\omega}(\a'\b)|)
	\\&\hspace{17em}\cdot \chi(\omega:\a \textrm{ satisfies }\eqref{good determinants})
	\cdot \chi(\omega:\a' \textrm{ satisfies }\eqref{good determinants})d\eta.
	\end{align*} 
	The integrals appearing in the sum above are in a form where we can apply Lemma \ref{transversality
		lemma rework}. Applying Lemma~\ref{transversality lemma rework} and the definition of $L_{\m,n,\eps_{1}}$, we see that we can bound the above by
	\begin{align*}
	&C\#L_{\m,n}\sum_{\a\in L_{\m,n,\eps_{1}}}\sum_{k=0}^{|\a|}\sum_{\stackrel{\a'\in
			L_{\m,n,\eps_{1}}}{|\a\wedge\a'|=k}}\m([\a])\m([\a'])\frac{s^dC_1 e^{k(\lambda(\eta,\m)+\eps_1)}}{\#L_{\m,n}}\\
	&\ll s^d\sum_{\a\in L_{\m,n,\eps_{1}}}\m([\a])\sum_{k=0}^{|\a|}\sum_{\stackrel{\a'\in
			L_{\m,n,\eps_{1}}}{|\a\wedge\a'|=k}}\m([\a'])e^{k(\lambda(\eta,\m)+\eps_1)}\\
	&\ll s^d\sum_{\a\in L_{\m,n,\eps_{1}}}\m([\a])\sum_{k=0}^{|\a|}\m([a_1\ldots a_{k}])e^{k(\lambda(\eta,\m)+\eps_1)}\\
	&\ll s^d\sum_{\a\in L_{\m,n,\eps_{1}}}\m([\a])\sum_{k=0}^{|\a|}e^{-k(h(\m)-\eps_1)}e^{k(\lambda(\eta,\m)+\eps_1)}\\
	&\ll s^d\sum_{\a\in L_{\m,n,\eps_{1}}}\m([\a])\sum_{k=0}^{|\a|}e^{-k(h(\m)-\lambda(\eta,\m)-2\eps_1)}\\
	&\ll s^d\sum_{\a\in L_{\m,n,\eps_{1}}}\m([\a])\\
	&\ll s^d.
	\end{align*}
	In the penultimate line we used that  $\sum_{k=0}^{\infty}e^{-k(h(\m)-\lambda(\eta,\m)-2\eps_1)}<\infty$. This is a consequence of the definition of $\eps_1$. Since all constants are universal, the proof follows.
\end{proof}
We now show how Proposition \ref{Pairs prop} can be used to construct a large separated subset of projections for a large set of $n$ for almost every $\omega\in \Omega'$.

For each $n\in \N$ and $\omega\in \Omega'$ we let $$Y_{n}(\omega):=\{\Pi_{\omega}(\a\b)\}_{\a\in \widetilde{L}_{\m,n,\eps_{1}}(\omega)}.$$
Moreover, given $s>0$, $\omega\in \Omega'$, and $n\geq N_{2}'$ we let $$\CP(s,\omega,n):=\left\{(\a,\a')\in \widetilde{L}_{\m,n,\eps_{1}}(\omega)\times \widetilde{L}_{\m,n,\eps_{1}}(\omega) : |\Pi_{\omega}(\a\b)-\Pi_{\omega}(\a'\b)|\leq \frac{s}{\#L_{\m,n}^{1/d}},\, \a\neq \a'\right\}$$ Recall that $T(Y,r)$ is the maximal cardinality of $r$-separated subsets of $Y$.
We will need the following technical result.

\begin{lemma}For any $\omega\in \Omega'$ and $n\in N_{2}'$ we have
	\label{well separated pairs}
	$$\#\widetilde{L}_{\m,n,\eps_{1}}(\omega)\leq T\left(Y_{n}(\omega),\frac{s}{\# L_{\m,n}^{1/d}}\right) +\#\CP(s,\omega,n).$$
\end{lemma}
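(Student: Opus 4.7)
I will prove this via a straightforward covering argument. Set $r := s/\#L_{\m,n}^{1/d}$ for brevity. The idea is that any $\a$ not realising one of the points in a maximal $r$-separated subset of $Y_n(\omega)$ must have its projection close to some other chosen projection, and hence contribute to $\CP(s,\omega,n)$.

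More concretely, let $Y'\subseteq Y_n(\omega)$ be a maximal $r$-separated subset, so that $\#Y' = T(Y_n(\omega),r)$. For each $y\in Y'$ pick one preimage $\a_y\in \widetilde{L}_{\m,n,\eps_1}(\omega)$ with $\Pi_{\omega}(\a_y\b)=y$, and let $S=\{\a_y:y\in Y'\}$. Then
\[
\#S \leq \#Y' = T\!\left(Y_n(\omega),\tfrac{s}{\#L_{\m,n}^{1/d}}\right).
\]
Now fix any $\a\in \widetilde{L}_{\m,n,\eps_1}(\omega)\setminus S$. Its projection $\Pi_{\omega}(\a\b)$ lies in $Y_n(\omega)$, so by maximality of $Y'$ there exists $y\in Y'$ with $|\Pi_{\omega}(\a\b)-y|\leq r$ (otherwise $Y'\cup\{\Pi_{\omega}(\a\b)\}$ would still be $r$-separated). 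Since $\a\notin S$ but $\a_y\in S$, the two indices are distinct, and $|\Pi_{\omega}(\a\b)-\Pi_{\omega}(\a_y\b)|\leq r$ yields an ordered pair $(\a,\a_y)\in \CP(s,\omega,n)$.

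Define $\Phi:\widetilde{L}_{\m,n,\eps_1}(\omega)\setminus S\to \CP(s,\omega,n)$ by $\Phi(\a)=(\a,\a_y)$, choosing $y$ as above. This map is injective because the first coordinate of $\Phi(\a)$ recovers $\a$. Consequently
\[
\#\bigl(\widetilde{L}_{\m,n,\eps_1}(\omega)\setminus S\bigr)\leq \#\CP(s,\omega,n),
\]
and adding the bound on $\#S$ gives the desired inequality. There is no real obstacle here: the argument is essentially a pigeonhole/maximality observation, and it uses no information specific to the RIFS beyond the combinatorial content of the definitions of $Y_n(\omega)$, $T(\cdot,\cdot)$, and $\CP(s,\omega,n)$.
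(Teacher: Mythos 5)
Your proof is correct, and it is a genuine (if mild) variant of the paper's argument rather than a reproduction of it. The paper partitions $\widetilde{L}_{\m,n,\eps_{1}}(\omega)$ directly into the indices $\a$ whose projection is more than $s/\#L_{\m,n}^{1/d}$ away from \emph{every} other projection and those admitting a close companion: the projections of the first class form an $\frac{s}{\#L_{\m,n}^{1/d}}$-separated subset of $Y_n(\omega)$, hence that class has cardinality at most $T\bigl(Y_n(\omega),\frac{s}{\#L_{\m,n}^{1/d}}\bigr)$, while the second class is bounded by $\#\CP(s,\omega,n)$ because $(\a,\a')\mapsto\a$ maps $\CP(s,\omega,n)$ onto it. You instead anchor the argument on a maximum-cardinality separated subset $Y'$ of the image, choose one preimage per point of $Y'$, and use maximality of $Y'$ to inject every remaining index into $\CP(s,\omega,n)$ via its first coordinate. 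Both are pure pigeonhole arguments using only the definitions of $Y_n(\omega)$, $T(\cdot,\cdot)$ and $\CP(s,\omega,n)$, and both handle correctly the possibility that distinct words share a projection (in the paper this is absorbed by the surjection onto the second class; in yours by selecting a single representative per point of $Y'$). The only cosmetic point in your write-up: when you invoke maximality to find $y\in Y'$ with $|\Pi_{\omega}(\a\b)-y|\leq s/\#L_{\m,n}^{1/d}$, the case $\Pi_{\omega}(\a\b)\in Y'$ should be noted separately (it is trivial, with $y=\Pi_{\omega}(\a\b)$), since there the set $Y'\cup\{\Pi_{\omega}(\a\b)\}$ equals $Y'$ and the cardinality contradiction does not apply; this does not affect the validity of the proof.
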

\begin{proof}
	We start by observing that 
	\begin{align*}
	\widetilde{L}_{\m,n,\eps_{1}}(\omega)=&\left\{\a\in \widetilde{L}_{\m,n,\eps_{1}}(\omega):|\Pi_{\omega}(\a\b)-\Pi_{\omega}(\a'\b)|> \frac{s}{\#L_{\m,n}^{1/d}}\, \forall \a'\neq \a\right\}\\
	\cup & \left\{\a\in \widetilde{L}_{\m,n,\eps_{1}}(\omega):\exists \a'\neq \a \textrm{ s.t.\ } |\Pi_{\omega}(\a\b)-\Pi_{\omega}(\a'\b)|\leq  \frac{s}{\#L_{\m,n}^{1/d}}\right\}
	\end{align*} 
	is a disjoint union. Notice also that the set of images corresponding to those $\a$ belonging to the first set in this union is $\frac{s}{\# L_{\m,n}^{1/d}}$-separated. Therefore
	$$\# \left\{\a\in \widetilde{L}_{\m,n,\eps_{1}}(\omega):|\Pi_{\omega}(\a\b)-\Pi_{\omega}(\a'\b)|>
	\frac{s}{\#L_{\m,n}^{1/d}}\, \forall \a'\neq \a\right\}\leq
	T\left(Y_{n}(\omega),\frac{s}{\# L_{\m,n}^{1/d}}\right).$$ Similarly, for the second set in this
	union we have
	$$\left\{\a\in \widetilde{L}_{\m,n,\eps_{1}}(\omega):\exists \a'\neq \a \textrm{ s.t.\ }
	|\Pi_{\omega}(\a\b)-\Pi_{\omega}(\a'\b)|\leq  \frac{s}{\#L_{\m,n}^{1/d}}\right\}\leq
	\# \CP(s,\omega,n).$$ This follows because the map $(\a,\a')\to \a$ from $\CP(s,\omega,n)$ to this
	set is surjective.  The desired inequality now follows. 
\end{proof}

Given $n\in\N$ and $\omega\in \Omega'$ let $$Y_{n}'(\omega):=\{\Pi_{\omega}(\a\b)\}_{\a\in L_{\m,n}}.$$ Notice that $Y_{n}(\omega)\subset Y_{n}'(\omega)$ therefore $T(Y_{n}(\omega),r)\leq  T(Y_{n}'(\omega),r)$ for any $r>0$. Given $c>0, s>0,$ and $n\in \N$ we also let 
$$B(c,s,n):=\left\{\omega\in \Omega':\frac{T(Y_{n}'(\omega),\frac{s}{\#L_{\m,n}^{1/d}})}{\# L_{\m,n}}>c\right\}.$$ Recall that we define the upper density of a set $B\subset \N$ to be
\[
\overline{d}(B):=\limsup_{n\to\infty}\frac{\#\{1\leq j\leq n:j\in B\}}{n}.
\]
\begin{proposition}The following equality holds
	\label{full measure prop}
	$$\eta\left(\bigcap_{\eps>0}\bigcup_{c,s>0}\{\omega\in \Omega':\overline{d}(n:\omega\in B(c,s,n))\geq 1-\eps\}\right)=\eta(\Omega').$$
\end{proposition}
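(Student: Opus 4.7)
My plan is to combine Lemma~\ref{well separated pairs} with the expected pair-count bound of Proposition~\ref{Pairs prop} to obtain a uniform-in-$n$ estimate of the form $\eta(\Omega'\setminus B(c_0/2,s,n))\leq Ks^d$, and then to pass from this pointwise-in-$n$ bound to a density statement via Fatou's lemma applied in the right direction. First, Property~2 of Proposition~\ref{important properties prop} together with $\#L_{\m,n}\asymp c_\m^{-n}$ (which follows from the slowly decaying property and the fact that the cylinders of $L_{\m,n}$ have full $\m$-measure) supplies a constant $c_0>0$ with $\#\widetilde L_{\m,n,\eps_1}(\omega)\geq c_0\cdot\#L_{\m,n}$ for every $\omega\in\Omega'$ and $n\geq N_2'$. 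Plugging this into Lemma~\ref{well separated pairs} and using the inclusion $Y_n(\omega)\subseteq Y_n'(\omega)$ yields
\[
c_0\cdot\#L_{\m,n}\leq T\!\left(Y_n'(\omega),\tfrac{s}{\#L_{\m,n}^{1/d}}\right)+\#\CP(s,\omega,n),
\]
so that any $\omega\in\Omega'\setminus B(c_0/2,s,n)$ must satisfy $\#\CP(s,\omega,n)/\#L_{\m,n}\geq c_0/2$. Markov's inequality applied to Proposition~\ref{Pairs prop} then produces a constant $K>0$ such that $\eta(\Omega'\setminus B(c_0/2,s,n))\leq Ks^d$ for all $n\geq N_2'$, uniformly in $n$.

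Next I would form the running averages $f_N^{(s)}(\omega):=\frac{1}{N}\sum_{n=N_2'}^N\chi_{\Omega'\setminus B(c_0/2,s,n)}(\omega)$ on $\Omega'$, so that $\int_{\Omega'} f_N^{(s)}\,d\eta\leq Ks^d$ for every $N$. Fatou's lemma then gives $\int_{\Omega'}\liminf_N f_N^{(s)}\,d\eta\leq Ks^d$, and a further Markov step yields
\[
\eta\!\left(\{\omega\in\Omega':\liminf_N f_N^{(s)}(\omega)>\eps\}\right)\leq Ks^d/\eps
\]
for every $\eps>0$. The key observation is that $\liminf_N f_N^{(s)}(\omega)$ is precisely the \emph{lower} density of the bad set $\{n:\omega\notin B(c_0/2,s,n)\}$, which equals $1-\overline d(\{n:\omega\in B(c_0/2,s,n)\})$. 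Consequently, for each $\eps>0$ and each rational $s>0$ the set $E_\eps^s:=\{\omega\in\Omega':\overline d(\{n:\omega\in B(c_0/2,s,n)\})\geq 1-\eps\}$ satisfies $\eta(E_\eps^s)\geq\eta(\Omega')-Ks^d/\eps$. Choosing a sequence $s_k\downarrow 0$ of positive rationals gives $\eta(\bigcup_k E_\eps^{s_k})=\eta(\Omega')$, and intersecting over the countable family $\eps\in\{1/j:j\in\N\}$ yields the required full-measure statement.

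The main obstacle is that one cannot argue by applying (reverse) Fatou to $\limsup_N f_N^{(s)}$: that inequality runs in the wrong direction, giving only $\int\limsup\geq\limsup\int$, and hence cannot be used to control $\limsup_N f_N^{(s)}$ from above on a set of full measure. The essential maneuver is to reformulate ``upper density of the good set $\geq 1-\eps$'' as ``lower density of the bad set $\leq\eps$'', since standard Fatou combined with Markov then controls the lower density pointwise almost everywhere and delivers the conclusion.
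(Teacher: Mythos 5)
Your argument is correct, and it rests on exactly the same ingredients as the paper's proof: the expected pair-count bound of Proposition \ref{Pairs prop}, the counting inequality of Lemma \ref{well separated pairs}, Markov's inequality, a Fatou-type passage from per-$n$ probability bounds to a density statement, and a countable exhaustion over the parameters. Where you differ is in the execution of the measure-theoretic step. The paper chooses $c,s$ anew for each $\eps$ so that the bad-pair probability is below $\eps$, bounds $\int_{\Omega'}\overline{d}(\cdot)\,d\eta$ from below via the reverse (dominated) Fatou inequality applied to the good-set averages, and then extracts the pointwise density statement through a contradiction argument with the $\sqrt{\eps}$ splitting, finishing with a conversion from $\widetilde{L}_{\m,n,\eps_1}$ and $Y_n$ to $L_{\m,n}$ and $Y_n'$ (the constant $K$ and the sets $B(\tfrac{1-c}{K},s,n)$). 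You instead fix $c=c_0/2$ once and for all using the uniform comparability $\#\widetilde{L}_{\m,n,\eps_1}(\omega)\asymp\#L_{\m,n}$ (which the paper also needs, so no extra hypothesis is consumed), obtain the uniform bound $\eta(\Omega'\setminus B(c_0/2,s,n))\leq Ks^d$ directly in terms of $Y_n'$, and then apply the standard Fatou lemma plus Markov to the \emph{lower} density of the bad set, using $\underline{d}(\mathrm{bad})=1-\overline{d}(\mathrm{good})$. This is essentially the complement/dual of the paper's step, but it buys you an explicit rate $Ks^d/\eps$, eliminates the contradiction argument and the $\sqrt{\eps}$ bookkeeping, and removes the final $\widetilde{L}$-to-$L$ conversion since you phrase everything in terms of $B(c,s,n)$ from the outset; the only (shared, standard) point left implicit in both treatments is the measurability of the sets involved, which your restriction to rational $s$ partly addresses anyway.
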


\begin{proof}
	Let $\eps>0$ be arbitrary. Notice that by Proposition \ref{Pairs prop} and Markov's inequality,  for any $c>0$ we have 
	\[
	c\cdot \eta(\omega\in \Omega':\#\CP(s,\omega,n)\geq c\#L_{\m,n})=\O(s^d)
	\]
	for any $n\geq N_2'.$
	Therefore, since $\#L_{\m,n}\asymp \#\widetilde{L}_{\m,n,\eps_{1}}(\omega)$ we can choose $c,s>0$ such that
	\[
	\eta(\omega\in \Omega':\#\CP(s,\omega,n)\geq c\#\widetilde{L}_{\m,n,\eps_{1}}(\omega))<\eps.
	\]
	Therefore by Lemma \ref{well separated pairs}, for this choice of $c,s$ we have
	$$\eta\left(\omega\in \Omega':T\left(Y_{n}(\omega),\frac{s}{\# L_{\m,n}^{1/d}}\right)\geq
	\#\widetilde{L}_{\m,n,\eps_{1}}(\omega)(1-c)\right)\geq \eta(\Omega')-\eps$$ for any $n\geq N_2'$. Using this
	inequality and apply Fatou's lemma we have
	\begin{align}
	\label{nearly full}
	&\int_{\Omega'}\overline{d}\left(n:T\left(Y_{n}(\omega),\frac{s}{\# L_{\m,n}^{1/d}}\right)\geq
	\#\widetilde{L}_{\m,n,\eps_{1}}(\omega)(1-c)\right)d\,\eta\nonumber \\
	=&\int_{\Omega'}\limsup_{N\to\infty}\frac{\#\{1\leq n\leq N: T\left(Y_{n}(\omega),\frac{s}{\# L_{\m,n}^{1/d}}\right)
		\geq \#\widetilde{L}_{\m,n,\eps_{1}}(\omega)(1-c)\}}{N}\, d\eta\nonumber\\
	\geq & \limsup_{N\to\infty}\int_{\Omega'}\frac{\sum_{n=1}^{N}\chi(\omega:T\left(Y_{n}(\omega),\frac{s}{\# L_{\m,n}^{1/d}}\right)\geq \#\widetilde{L}_{\m,n,\eps_{1}}(\omega)(1-c))}{N}\, d\eta\nonumber\\
	\geq & \eta(\Omega')-\eps.
	\end{align}
	We now show that this implies that the occurrence of a large separated set for a set of $n$ with high upper density has large probability. That is, we will prove the inequality
	\begin{equation}\label{eq:toprovecontra}
	\eta\left(\omega\in \Omega':\overline{d}\left(n:T\left(Y_{n}(\omega),\frac{s}{\# L_{\m,n}^{1/d}}\right)\geq \#\widetilde{L}_{\m,n,\eps_{1}}(\omega)(1-c)\right)\geq 1-\sqrt{\eps}\right)\geq \eta(\Omega')-\sqrt{\eps}.
	\end{equation}
	Assume for a contradiction that 
	\[
	\eta\left(\omega\in \Omega':\overline{d}\left(n:T\left(Y_{n}(\omega),\frac{s}{\# L_{\m,n}^{1/d}}\right)\geq \#\widetilde{L}_{\m,n,\eps_{1}}(\omega)(1-c)\right)< 1-\sqrt{\eps}\right)>\sqrt{\eps}.
	\]
	As the density is always bounded above by $1$ we have
	\begin{align}
	&\int_{\Omega'}\overline{d}\left(n:T\left(Y_{n}(\omega),\frac{s}{\# L_{\m,n}^{1/d}}\right)\geq
	\#\widetilde{L}_{\m,n,\eps_{1}}(\omega)(1-c)\right)d\,\eta\nonumber \\
	&\leq \eta\left(\omega\in \Omega':\overline{d}\left(n:T\left(Y_{n}(\omega),\frac{s}{\#
		L_{\m,n}^{1/d}}\right)\geq \#\widetilde{L}_{\m,n,\eps_{1}}(\omega)(1-c)\right)<
	1-\sqrt{\eps}\right)(1-\sqrt{\eps})\nonumber\\
	&+ \eta\left(\omega\in\Omega':\overline{d}\left(n:T\left(Y_{n}(\omega),\frac{s}{\#
		L_{\m,n}^{1/d}}\right)\geq \#\widetilde{L}_{\m,n,\eps_{1}}(\omega)(1-c)\right)\geq
	1-\sqrt{\eps}\right).\label{eq:presimplify}
	\end{align}
	The second term on the right hand side of \eqref{eq:presimplify} is equal to
	\[
	\eta(\Omega')- \eta\left(\omega\in \Omega':\overline{d}\left(n:T\left(Y_{n}(\omega),\frac{s}{\#
		L_{\m,n}^{1/d}}\right)\geq \#\widetilde{L}_{\m,n,\eps_{1}}(\omega)(1-c)\right)< 1-\sqrt{\eps}\right).
	\]
	Therefore the right hand side of \eqref{eq:presimplify} can be bounded above by
	\begin{align*}&\eta(\Omega')-\sqrt{\eps}\eta\left(\omega\in\Omega':\overline{d}\left(n:T\left(Y_{n}(\omega),\frac{s}{\#
		L_{\m,n}^{1/d}}\right)\geq \#\widetilde{L}_{\m,n,\eps_{1}}(\omega)(1-c)\right)< 1-\sqrt{\eps}\right)\\
	&<\eta(\Omega')-\eps.
	\end{align*} Where in the final line we used our underlying assumption. However, this contradicts \eqref{nearly full}. Therefore \eqref{eq:toprovecontra} holds.
	
	We have proved that for any $\eps>0$ we can chose $c,s>0$ such that
	\eqref{eq:toprovecontra} holds. In particular, letting $\eps_k\to 0$ with $\eps_k <\eps$ and picking appropriate
	sequences $s_k,c_k$ we may conclude that
	\[
	\eta\left(\bigcup_{c,s>0}\{\omega\in\Omega':\overline{d}\left(n:T\left(Y_{n}(\omega),\frac{s}{\# L_{\m,n}^{1/d}}\right)\geq
	\#\widetilde{L}_{\m,n,\eps_{1}}(\omega)(1-c)\right)\geq 1-\eps\}\right)=\eta(\Omega').
	\]
	Recall that $\eps>0$ was arbitrary. 
	Therefore taking the
	intersection over all $\eps>0$ we have
	\begin{equation}
	\label{nearly done}
	\eta\left(\bigcap_{\eps>0}\bigcup_{c,s>0}\{\omega\in
	\Omega':\overline{d}\left(n:T\left(Y_{n}(\omega),\frac{s}{\# L_{\m,n}^{1/d}}\right)\geq
	\#\widetilde{L}_{\m,n,\eps_{1}}(\omega)(1-c)\right)\geq 1-\eps\}\right)=\eta(\Omega').
	\end{equation}
	
	Because $T\left(Y_{n}(\omega),\frac{s}{\# L_{\m,n}^{1/d}}\right)\leq
	T\left(Y_{n}'(\omega),\frac{s}{\# L_{\m,n}^{1/d}}\right)$ and  $\# L_{\m,n}\asymp \#
	\widetilde{L}_{\m,n}$, there exists $K>0$ such that
	\[
	T\left(Y_{n}(\omega),\frac{s}{\# L_{\m,n}^{1/d}}\right)\geq
	\#\widetilde{L}_{\m,n}(\omega)(1-c)
	\Rightarrow T\left(Y'_{n}(\omega),\frac{s}{\# L_{\m,n}^{1/d}}\right)\geq \frac{\# L_{\m,n}(1-c)}{K}.
	\]
Therefore by \eqref{nearly done} we have
	\[
	\eta\left(\bigcap_{\eps>0}\bigcup_{c,s>0}\{\omega\in
	\Omega':\overline{d}\left(n:\omega\in B(\tfrac{1-c}{K},s,n)\right)\geq 1-\eps\}\right)=\eta(\Omega').
	\]
	This completes our proof.
\end{proof}
With Proposition \ref{full measure prop} we are now in a position to prove Statement 1 from Theorem \ref{Main theorem}.

\begin{proof}[Theorem \ref{Main theorem}, Statement $1$]
	Let us fix 	$$\omega\in \bigcap_{\eps>0}\bigcup_{c,s>0}\{\omega\in \Omega':\overline{d}(n:\omega\in B(c,s,n))\geq 1-\eps\}.$$ Let $g\in G$ be arbitrary. By definition there exists $\eps^{*}>0$ such that $g\in G_{\eps^{*}}$. By the definition of  $G_{\eps^{*}}$ we can choose $c>0,s>0$ such that 
	\begin{equation*}
	\sum_{n:\omega\in B(c,s,n)}g(n)=\infty.
	\end{equation*} Moreover, notice that $\#L_{\m,n}\asymp c_{\m}^{-n}$. Therefore the two
	assumptions of Proposition \ref{general prop} are satisfied and so the set $$\left\{x\in
	\mathbb{R}^{d}:|x-\Pi_{\omega}(\a\b)|\leq \left(\frac{g(n)}{\#
      L_{\m,n}}\right)^{1/d}\textrm{ for some }\a\in L_{\m,n}\textrm{ for i.m.\ }n\right\}$$ has
      positive Lebesgue measure. Recall that $\#L_{\m,n}^{-1}\asymp \m([\a])$ for any $\a\in
      L_{\m,n}.$ Therefore Lemma \ref{comparable balls} implies that $U_{\omega}(\b,\m,g)$ has
      positive Lebesgue measure for any $g\in G$. By Proposition \ref{full measure prop} it follows
      that for almost every $\omega\in\Omega'$ the set $U_{\omega}(\b,\m,g)$ has positive Lebesgue
      measure for any $g\in G$. Since $\eta(\Omega')>1-\epsilon_0,$ and $\epsilon_0$ was arbitrary,
      it follows that for almost every $\omega\in \Omega,$ for any $g\in G$ the set
      $U_{\omega}(\b,\m,g)$ has positive Lebesgue measure. This completes our proof.
\end{proof}

\section{Proof of Corollary \ref{Bernoulli corollary}}\label{sec:Corollary}
We now show how Corollary \ref{Bernoulli corollary} follows from Theorem \ref{Main theorem}. The proof is essentially the same as the proof of Corollary 2.3 from \cite{BakOver}. We include the details for completion.
\begin{proof}[Proof of Corollary \ref{Bernoulli corollary}]
	Let us fix a RIFS such either assumption A or assumption B is satisfied. Let us also fix a probability vector $(p_i)_{i\in \A}$ such that $\frac{-\sum_{i\in A}p_i\log p_i}{\sum p_i\lambda'(\eta_i)}>1.$

	By Theorem \ref{Main theorem}, to prove our result it suffices to show
	that if we let $\Psi:\A^*\to[0,\infty)$ be given by
	$\Psi(\a)=\frac{\prod_{k=1}^{|\a|}p_{a_k}}{|\a|},$ then $\Psi$ is equivalent to $(\m,g)$ for some
	$g\in G$. Here we let $\m$ denote the Bernoulli measure corresponding to $(p_i)_{i\in \A}$. 
	
	Let $g(n)=\frac{1}{n},$ then using the well known identity 
	\[
	\sum_{n=1}^{N}\frac{1}{n}= \log N
	+\O(1),
	\]
	it can be shown that $g\in G$. For any $\a\in \A^*$ we have 
	\begin{equation}
	\label{decay bounds}
	(\min_{i\in \A}p_i)^{|\a|}\leq \m([\a])\leq (\max_{i\in \A}p_i)^{|\a|}.
	\end{equation}Using \eqref{decay bounds} and the fact each $\a\in L_{\m,n}$ satisfies
	$\m([\a])\asymp c_{\m}^n,$ we may deduce that 
	\[
	|\a|\asymp n
	\]
	for any $\a\in L_{\m,n}$. This implies
	\[
	\frac{\prod_{k=1}^{|\a|}p_{a_k}}{|\a|}\asymp \frac{\m([\a])}{n}
	\]
	for any $\a\in L_{\m,n}$.
	Therefore $\Psi$ is equivalent to $(\m,g)$ for our choice of $g$. This completes our proof. 	
\end{proof}

\section{Examples}
\label{example section}
In this section we detail some examples of RIFSs to which our results can be applied. The first example
is stochastically self-similar and is distantly non-singular, whereas the second is stochastically
self-affine and non-singular.

\begin{example}
	For each $i\in \A$ assume that there exists $0\leq r_{i}^{-}<r_{i}^{+}<1$ such that
	\[
	\Omega_i:=\{\lambda \cdot O: \lambda\in [r_i^{-},r_{i}^{+}]\text{ and } O\in \O(d)\}.
	\]
	Here
	$\O(d)$ is the set of $d\times d$ orthogonal matrices. Note that $\Omega_i\subset S_d$ for all
	$i\in \A$. For each $i\in \A$ we define a measure $\eta_i$ on $\Omega_i$ according to the law
	where $\lambda$ and $O$ are chosen independently with respect to the normalised Lebesgue measure
	on $[r_i^{-},r_{i}^{+}]$ and the Haar measure on $\O(d)$ respectively. For any $x\in \mathbb{R}^d$
	we define the map $P_x:\Omega_i\to \mathbb{R}^d$ given by $P_x(A)=Ax$. It can be shown that the
	pushforward measure $(P_x)_{*}\eta_i$ is the normalised Lebesgue measure on the annulus 
	\[
	\{y\in
	\mathbb{R}^d: r_{i}^{-}\|x\|\leq \|y\|\leq r_{i}^{+}\|x\|\}.
	\]
	Note that $(P_x)_{*}\eta_i$  is
	absolutely continuous with respect to the Lebesgue measure on $\mathbb{R}^d$. Moreover, for any
	$\eps>0,$ there exists $C=C(\eps)>0$ such that for any $x$ satisfying $\|x\|\geq
	\eps,$ the Radon-Nikodym derivative of $(P_x)_{*}\eta_i$ is uniformly bounded above by $C$.
	
	Now let us fix a collection $\{t_i\}_{i\in \A}$ of distinct translation vectors and let
	$r_0:=\frac{\min_{i\neq j}|t_i-t_j|}{16}$. If $\|x\|< r_0$ and $r<r_0$ then for any $y\in
	\R^d\setminus B(0,\frac{\min_{i\neq j}|t_i-t_j|}{8})$ we have $\{A\in\Omega_i\;:\;A(x) \in
	B(y,r)\}=\emptyset$. Therefore 
	\begin{equation}
	\label{one}
	\eta_i(\A\in\Omega_i\;:\;A(x) \in B(y,r))=0.
	\end{equation}If $\|x\|< r_0$ and $r\geq r_0,$ we can choose $C_1>0$ sufficiently large in a way that only depends upon $r_0$ such that  
	\begin{equation}
	\label{two}
	\eta_i(\A\in\Omega_i\;:\;A(x) \in B(y,r))\leq C_1 r^d
	\end{equation} for all $y\in \mathbb{R}^d$. If $\|x\|\geq r_0$ then it follows by our above remarks regarding the Radon-Nikodym derivative that there exists $C_2>0$ independent of $x$ such that
	\begin{equation}
	\label{three}
	\eta_i(\A\in\Omega_i\;:\;A(x) \in B(y,r))=((P_x)_{*}\eta_i)(B(y,r))\leq C_2 r^d
	\end{equation}for all $y\in \mathbb{R}^d$ and $r>0$.
	
	Combining \eqref{one}, \eqref{two}, and \eqref{three} we see that the RIFS  
	$(\{\Omega_i\}_{i\in \A},\{\eta_i\}_{i\in \A},\{t_i\}_{i\in \A})$ is distantly non-singular.
	We need to check that the logarithmic condition \eqref{eq:CramerAss} holds for $|s|$ sufficiently small. We see that
	\[
	  \log\int_{\Omega_i}|\Det(A)|^sd\eta_i(A) = \log \int_{r_i^-}^{r_i^+} \frac{r^s}{r_i^+ - r_i^-} dr
	  =\log\frac{(r_i^+)^{s+1} - (r_i^-)^{s+1}}{(r_i^+-r_i^-)(s+1)},
	\]
	which is finite for all $s>-1$. Therefore \eqref{eq:CramerAss} holds for all $|s|<1$.

	Now by an
	appropriate choice of parameters, it is straightforward to construct many slowly decaying
	$\sigma$-invariant ergodic probability measures $\m$ such that $\frac{h(\m)}{\lambda(\eta,\m)}>1$.
	Therefore Theorem \ref{Main theorem}, Corollary \ref{Bernoulli corollary}, and Corollary \ref{measure corollary} can be applied to this random model.

\end{example}

\begin{example}
	For each $i\in \A$ let $Z_i$ be a compact subset of $M_d$ such
	that each $A\in Z_i$ satisfies $\|Ax\|\geq c_i\|x\|$ for all $x\in \R^d$ for some $c_i>0$. Also assume that for each
	$i\in \A$ there exists a Borel probability measure $\nu_i$ supported on $Z_i$. For each $i\in \A$
	let $0\leq r_{i}^{-}<r_{i}^{+}<1$ and 
	\[
	\Omega_i:= \{A=\lambda\cdot O B: \lambda\in
	[r_{i}^{-},r_{i}^{+}], O\in \O(d), \text{ and }B\in Z_i\}.
	\]
	We define a measure $\eta_i$ on
	$\Omega_i$ by choosing $\lambda$, $O,$ and $B$ independently with respect to the normalised
	Lebesgue measure $\L$ on $[r_{i}^{-},r_{i}^{+}],$ the Haar measure $m$ on $\O(d),$ and $\nu_i$
	respectively.	
	Let $\{t_i\}_{i\in \A}$ be a finite set of distinct vectors. We assume $\{t_i\}_{i\in \A}$ and
	$\{\Omega_i\}$ are such that there exists $\delta>0$ for which 
	\[
	B(0,\delta)\cap \bigcup_{\omega\in
		\Omega}\Pi_{\omega}(\A^{\mathbb{N}})=\emptyset.
	\]
	This property is satisfied for example if each
	$t_i$ satisfies $\|t_i\|=1$ and each $A\in \cup_{i\in \A}\Omega_i$ satisfies $\|A\|<1/2$.

	We now show that if the above conditions are satisfied then the RIFS is non-singular.
	Let us fix an ellipse $E$ and $x\in \cup_{\omega}\Pi_{\omega}(\A^{\N})$. By Fubini's
	theorem
	\begin{equation}
	\label{1a}
	\eta_i(A\in\Omega_i\;:\;A(x) \in E )=\int_{Z_i}(\L\times m )((r,O): r\cdot OBx\in E)d\nu_i(B).
	\end{equation} By construction $Bx$ is a point with norm $\|Bx\|\geq c\delta$. Therefore by the same
	reasoning as given in Example $1,$ by considering appropriate pushforwards, it can be shown that
	there exists $C>0$ independent of $x$ and $i$ such that 
	\[
	(\L\times m )((r,O): r\cdot OBx\in E)\leq C \cdot Vol(E)
	\]
	for any $B\in Z_i$. Substituting this
	bound into \eqref{1a} we obtain 
	\[
	\eta_i(A\in\Omega_i\;:\;A(x) \in E )\leq C\cdot Vol(E).
	\]
	Hence
	our RIFS is non-singular. The logarithmic condition \eqref{eq:CramerAss} holds for this RIFS for all $|s|$ sufficiently small by analogous reasoning to that given in Example $1$. Therefore Theorem \ref{Main theorem}, Corollary \ref{Bernoulli corollary}, and Corollary \ref{measure corollary} can be applied to this random model.
\end{example}

We conclude by remarking that to show that a RIFS is non-singular it is sufficient to show that the pushforward measure $(P_x)_{*}\eta_i$ is absolutely
continuous for all $x\in \cup_{\omega}\Pi_{\omega}(\A^{\N})$ and $i\in \A$, and that the Radon-Nikodym derivative can
be bounded above by some constant independent of $x$ and $i$. This is the technique we have used in
Example $2$.

\section{Final discussion}
\label{discussion section}
\begin{remark}
	In a random recursive model one usually expects the threshold quantities to be defined in terms of
	the arithmetic average of random variables, as opposed to the geometric average that is the expected
	behaviour in $1$-variable models. Here, this means that one na\"ively suspects the Lyapunov exponent
	to be
	\[
	\lambda'(\eta_i) = \log\int_{\Omega_i}|\Det(A)|d\eta_i(A)
	\]
	instead of 
	\[
	\lambda'(\eta_i) = \int_{\Omega_i}\log|\Det(A)|d\eta_i(A).
	\]
	While we cannot exclude the possibilities that our work could be improved, the near optimal usage of
	large deviations in our work suggests that the second Lyapunov exponent is the correct one to use.
	This is unexpected and could be explained by us requiring level specific information on worst cases, 
	as opposed to ``eventually averaging'' of behaviour of the descendants of each node.
\end{remark}

\section*{Acknowledgements}
ST was funded by the Austrian Science Fund (FWF): M-2813.

Part of this research was conducted while the authors met at the Number Theory and Dynamics
conference at the CMS, Cambridge University, in March 2019. We wish to thank the organisers
and university for the pleasant research environment.


\begin{thebibliography}{1}
	\bibitem{ABa} D.~Allen, B.~B\'{a}r\'{a}ny, \textit{Diophantine Approximation on Fractals:
		Hausdorff measures of shrinking targets on self-conformal sets,}
		arXiv:1911.03410, (2019).
	\bibitem{Bak2} S.~Baker, \textit{An analogue of Khintchine's theorem for self-conformal
	  sets,} Math.~Proc.~Cambridge Philos.~Soc., \textbf{167}(3), (2019), 567--597.
	\bibitem{Bak} S.~Baker, \textit{Approximation properties of $\beta$-expansions,} Acta
	  Arith., \textbf{168}, (2015), 269--287.
	\bibitem{Bakerapprox2} S.~Baker, \textit{Approximation properties of $\beta$-expansion II,} Ergodic
	Theory and Dynamical Systems (to appear).
	\bibitem{BakOver} S.~Baker, \textit{Overlapping iterated function systems from the perspective of
		metric number theory,} Mem.~Amer.~Math.~Soc. (To appear).
	\bibitem{BaHoRa} B.~B\'{a}r\'{a}ny, M.~Hochman, A.~Rapaport, \textit{Hausdorff dimension of planar
	  self-affine sets and measures,} Invent.~Math.,  \textbf{216}, (2019), 601--659.
	\bibitem{BenSol} I.~Benjamini, B.~Solomyak, \textit{Spacings and pair correlations for finite
	  Bernoulli convolutions,} Nonlinearity \textbf{22}(2), (2009), 381–-393.
	\bibitem{BDV} V.~Beresnevich, D.~Dickinson, S.~Velani, \emph{Measure theoretic laws for lim sup
	  sets,} Mem.~Amer.~Math.~Soc., \textbf{179}(846), (2006).
	\bibitem{BerVel2} V.~Beresnevich, S.~Velani, \textit{A note on zero-one laws in metrical Diophantine
	  approximation,} Acta Arith.~\textbf{133}(4), (2008), 363-–374.
	\bibitem{Bow} R.~Bowen,\textit{ Equilibrium states and the ergodic theory of Anosov
		diffeomorphisms,} Second edition. Lecture Notes in Mathematics, 470. Springer-Verlag, Berlin,
	2008. viii+75 pp. ISBN: 978-3-540-77605-5.
	\bibitem{DufSch} R.~J.~Duffin, A.~C.~Schaeffer, \textit{Khintchine's problem in metric Diophantine
	  approximation,} Duke Math.~J., \textbf{8}, (1941). 243-–255.
	\bibitem{Fal1} K.~Falconer, \textit{Fractal geometry. 
		Mathematical foundations and applications,} Third edition. John Wiley \& Sons, Ltd.,
	Chichester, 2014. xxx+368 pp. ISBN: 978-1-119-94239-9. 
	\bibitem{Fal2} K.~Falconer, \textit{Techniques in fractal geometry,} John Wiley \& Sons, Ltd.,
	Chichester, 1997. xviii+256 pp. ISBN: 0-471-95724-0.
	\bibitem{Falconer86}K.~Falconer. \textit{Random fractals,} Math.~Proc.~Cambridge
	  Philos.~Soc., \textbf{100},
	(1986), 559--582.
	\bibitem{Fal} K.~Falconer, \textit{The Hausdorff dimension of self-affine fractals,} Math.~Proc.
	  Cambridge Philos.~Soc., \textbf{103}, (1988), 339–-350.
	\bibitem{Graf87} S.\ Graf. \textit{Statistically self-similar fractals,} Probab.\ Theory Related
	  Fields, \textbf{74}, (1987), 357--392.
	\bibitem{Hochman} M.~Hochman, \textit{On self-similar sets with overlaps and inverse theorems for
		entropy,} Ann.~Math., {\bf 180}, (2014), 773--822.
	\bibitem{Hochman2} M.~Hochman, \textit{On self-similar sets with overlaps and inverse theorems for
		entropy in $R^d$,} Mem.~Amer.~Math.~Soc. (To appear).
	\bibitem{Hut}  J.~Hutchinson, \textit{Fractals and self-similarity}, Indiana Univ.~Math.~J. 30 (1981), no.~5, 713--747.
	\bibitem{Jarvenpaa17} E.~J\"arvenp\"a\"a, M.~J\"arvenp\"a\"a, M.~Wu, W.~Wu, \textit{Random
	  affine code tree fractals: Hausdorff and affinity dimensions and pressure,} Math.~Proc.
	  Cambridge Philos.~Soc. \textbf{162}, (2017), 367--382.
	\bibitem{Khit} A.~Khintchine, \textit{Einige S\"atze \"uber Kettenbruche, mit Anwendungen auf die
	  Theorie der Diophantischen Approximationen,} Math.~Ann.~\textbf{92},
	(1924), 115–125.
	\bibitem{JoPoSi} T.~Jordan, M.~Pollicott, K.~Simon, \textit{Hausdorff dimension for randomly
	  perturbed self affine attractors,} Comm.~Math.~Phys. \textbf{270}(2), (2007), 519-–544. 
	\bibitem{Koivusalo13} H.~Koivusalo, \textit{Dimension of uniformly random self-similar fractals},
	  Real Anal.\ Exchange, \textbf{39}, (2013/14), 73--90.
	\bibitem{KouMay} D.~Koukoulopoulos, J.~Maynard, \textit{On the Duffin-Schaeffer conjecture,}
	  Ann.~of Math., \textbf{192}, (2020), 251--307.
	\bibitem{LSV} J.~Levesley, C.~Salp, S.~Velani, \textit{On a problem of K.~Mahler: Diophantine
	  approximation and Cantor sets,} Math.~Ann.~\textbf{338}(1), (2007), 97-–118.
	\bibitem{Olsen} L.~Olsen. \textit{Random geometrically graph directed self-similar multifractals}, Pitman Research
	Notes in Mathematics Series, 1994.
	\bibitem{PSS} Y.~Peres, K.~Simon, B.~Solomyak, \textit{Absolute continuity for random iterated
	  function systems with overlaps,} J.~Lond.~Math.~Soc., \textbf{76}, (2006), 739--756.
	\bibitem{PerSol} Y.~Peres, B.~Solomyak, \textit{Absolute continuity of Bernoulli convolutions, a
	  simple proof,} Math.~Res.~Lett. \textbf{3}(2), (1996), 231-–239.
	\bibitem{PerRev} T.~Persson, H.~Reeve, \textit{A Frostman type lemma for sets with large
	  intersections, and an application to Diophantine approximation,}  Proc.\ Edin.\ Math.\
	  Soc., \textbf{58}(2), (2015), 521–-542.
	\bibitem{PerRevA} T.~Persson, H.~Reeve, \textit{On the Diophantine properties of
	  $\lambda$-expansions,}  Mathematika, \textbf{59}(1), (2013), 65-–86.
	\bibitem{PolSimon} M.~Pollicott, K.~Simon, \textit{The Hausdorff dimension of $\lambda$-expansions
	  with deleted digits,} Trans.~Amer.~Math.~Soc., \textbf{347}(3), (1995), 967–-983.
	\bibitem{ShmerkinSuomala} P.~Shmerkin, V.~Suomala, \textit{Spatially independent martingales,
	  intersections, and applications}, Mem.~Amer.~Math.~Soc., \textbf{1195}, 2018.
	\bibitem{Shm2} P.~Shmerkin, \textit{On Furstenberg's intersection conjecture, self-similar measures,
	  and the $L^q$ norms of convolutions,} Ann.~of Math., \textbf{189}(2), (2019), 319--391.
	\bibitem{Sol} B.~Solomyak, \textit{On the random series $\sum\pm \lambda^{-n}$ (an Erd\H{o}s
	  problem),} Ann.~of Math., \textbf{142}(3), (1995), 611-–625. 
	\bibitem{tromodels}S.~Troscheit, \textit{On the dimensions of attractors of random self-similar
		graph directed systems,} J.~Fractal Geom., \textbf{4}, (2017), 257--303.
	\bibitem{Varju2} P.~Varju, \textit{On the dimension of Bernoulli convolutions for all transcendental
	  parameters,} Ann.~of Math., \textbf{189}(3), (2019), 1001--1011.
\end{thebibliography}
\end{document}